\newtheorem{theorem}{Theorem}[section]
\newtheorem{proposition}[theorem]{Proposition}
\newtheorem{corollary}[theorem]{Corollary}
\newtheorem{lemma}[theorem]{Lemma}
\theoremstyle{definition}
\newtheorem*{remark*}{Remark}
\newcommand{\FF}{\mathbb{F}}
\newcommand{\CC}{\mathbb{C}}
\newcommand{\NN}{\mathbb{N}}
\newcommand{\QQ}{\mathbb{Q}}
\newcommand{\Fc}{\mathfrak{F}}
\newcommand{\1}{\mathbbm{1}}
\newcommand{\e}{\epsilon}
\newcommand{\floor}[1]{\lfloor#1\rfloor}
\newcommand{\ceil}[1]{\lceil#1\rceil}
\newcommand{\abs}[1]{\lvert#1\rvert}
\newcommand{\bigabs}[1]{\big\lvert#1\big\rvert}
\newcommand{\biggabs}[1]{\bigg\lvert#1\bigg\rvert}
\newcommand{\Biggabs}[1]{\Bigg\lvert#1\Bigg\rvert}
\DeclareMathOperator{\ord}{ord}
\DeclareMathOperator{\Tr}{Tr}
\DeclareMathOperator{\N}{N}
\DeclareMathOperator{\Gal}{Gal}
\begin{document}

\title{Highly nonlinear functions over finite fields}

\author{Kai-Uwe Schmidt}
\address{Department of Mathematics, Paderborn University, Warburger Str.\ 100, 33098 Paderborn, Germany.}
\email{kus@math.upb.de}

\date{16 September 2019}

\subjclass[2010]{Primary: 05D40; Secondary: 94B05}

\begin{abstract}
We consider a generalisation of a conjecture by Patterson and Wiedemann from 1983 on the Hamming distance of a function from $\FF_q^n$ to $\FF_q$ to the set of affine functions from $\FF_q^n$ to $\FF_q$. We prove the conjecture for each $q$ such that the characteristic of $\FF_q$ lies in a subset of the primes with density $1$ and we prove the conjecture for all $q$ by assuming the generalised Riemann hypothesis. Roughly speaking, we show the existence of functions for which the distance to the affine functions is maximised when $n$ tends to infinity. This also determines the asymptotic behaviour of the covering radius of the $[q^n,n+1]$ Reed-Muller code over~$\FF_q$ and so answers a question raised by Leducq in 2013. Our results extend the case $q=2$, which was recently proved by the author and which corresponds to the original conjecture by Patterson and Wiedemann. Our proof combines evaluations of Gauss sums in the semiprimitive case, probabilistic arguments, and methods from discrepancy theory.
\end{abstract}

\maketitle

\thispagestyle{empty}


\section{Introduction and results}

The Hamming distance of two functions $g,h:\FF_q^n\to\FF_q$ is
\[
d(g,h)=\#\{y\in\FF_q^n:g(y)\ne h(y)\}.
\]
We define the \emph{nonlinearity} of $g:\FF_q^n\to\FF_q$ to be
\begin{equation}
N(g)=\min_hd(g,h),   \label{eqn:def_nonlinearity}
\end{equation}
where the minimum is over all $q^{n+1}$ affine functions $h$ from $\FF_q^n$ to $\FF_q$. We are interested in functions with largest nonlinearity. Accordingly define $\rho_q(n)$ to be the maximum of $N(g)$ over all functions $g$ from $\FF_q^n$ to $\FF_q$.
\par
The number $\rho_2(n)$ equals the covering radius of binary Reed-Muller code of order one $R_2(1,n)$~\cite{HelKloMyk1978} and in general $\rho_q(n)$ is the covering radius of the appropriate generalisation $R_q(1,n)$ over $\FF_q$~\cite{Led2013}. The determination of the covering radius of $R_q(1,n)$ appears to be one of the most mysterious problems in coding theory~\cite{Slo1987},~\cite{Led2013}. We refer to~\cite{KasLinPet1968} for~background on Reed-Muller codes over $\FF_q$ and to~\cite{CohHonLitLob1997} for background on the covering radius of codes in general and its combinatorial and geometric significance.
\par
It is convenient to use the normalisation
\[
\mu_q(n)=\frac{q^{n-1}(q-1)-\rho_q(n)}{q^{n/2-1}}.
\]
It is known that
\begin{equation}
1\le\mu_q(n+2k)\le\mu_q(n)   \label{eqn:bounds_mu}
\end{equation}
for all prime powers~$q$ and all positive integers~$n$ and~$k$. This was proved in~\cite{HelKloMyk1978} for $q=2$ and in~\cite[Proposition~11 and Lemma~19]{Led2013} for all~$q$. It is not difficult to see that $\mu_q(2)=1$ and so $\mu_q(n)=1$ for all even $n$, as shown in~\cite[Corollary 1]{HelKloMyk1978} for $q=2$ and \cite[Corollary~13]{Led2013} for all~$q$. 
\par 
We are interested in the case that $n$ is odd. It is readily verified~\cite[p.~1594]{Led2013} that $\mu_q(1)=\sqrt{q}$ and therefore
\begin{equation}
1\le \mu_q(n)\le \sqrt{q}   \label{eqn:elementary_bounds_mu}
\end{equation}
for all prime powers $q$ and all positive integers~$n$. It is known that $\mu_2(n)=\sqrt{2}$ for each $n\in\{3,5,7\}$~\cite{Myk1980}. Patterson and Wiedemann~\cite{PatWie1983} improved the upper bound in~\eqref{eqn:elementary_bounds_mu} for $q=2$ to
\[
\mu_2(n)\le \tfrac{27}{32}\sqrt{2}= 1.19\dots\quad\text{for each $n\ge 15$}
\]
and, more recently, Kavut and Y\"ucel~\cite{KavYuc2010} showed that
\[
\mu_2(n)\le \tfrac{7}{8}\sqrt{2}=1.23\dots\quad\text{for each $n\ge 9$}.
\]
A famous conjecture by Patterson and Wiedemann~\cite{PatWie1983} asserts that
\[
\lim_{n\to\infty}\mu_2(n)=1
\]
and this conjecture was recently proved in~\cite{Sch2019}.
\par
This paper concerns the case that $q>2$. Leducq~\cite{Led2013} herself was able to improve the upper bound in~\eqref{eqn:elementary_bounds_mu} for $q=3$, by showing that $\mu_3(3)=\tfrac{2}{3}\sqrt{3}$ and so
\[
\mu_3(n)\le \tfrac{2}{3}\sqrt{3}=1.15\dots\quad\text{for each $n\ge 3$}.
\]
This suggests that for $q>2$ a similar phenomenon occurs as in the case $q=2$ and indeed we prove a corresponding result for many values of $q$.
\begin{theorem}
\label{thm:main}
Let $q$ be a power of a prime $p$ and suppose that there is another prime $r>3$ such that $r\equiv 3\pmod 4$ and $-p$ is a primitive root modulo $r^2$. Then $\lim_{n\to\infty}\mu_q(n)=1$.
\end{theorem}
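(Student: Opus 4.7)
\medskip
\noindent\textbf{Proof plan.}

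Fix a nontrivial additive character $\psi$ of $\FF_q$ and identify $\FF_q^n$ with $\FF_{q^n}$ via the trace form. A standard character-orthogonality computation shows that $\mu_q(n)\le 1+\varepsilon$ is implied by the existence of $g\colon\FF_{q^n}\to\FF_q$ with
\[
\biggabs{\sum_{y\in\FF_{q^n}}\psi\bigl(tg(y)-\Tr(ay)\bigr)}\le(1+\varepsilon)q^{n/2}
\]
for every $a\in\FF_{q^n}$ and every $t\in\FF_q^*$. By the monotonicity~\eqref{eqn:bounds_mu} it suffices to exhibit such a $g$ at arbitrarily large odd~$n$; the lower bound of~\eqref{eqn:elementary_bounds_mu} then forces $\mu_q(n)\to 1$.

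The hypothesis on $-p$ together with $r\equiv 3\pmod 4$ forces $p$ (hence $q$) to have odd multiplicative order in $(\ZZ/r^2\ZZ)^*$: indeed, with $g=-p$ generating the cyclic group of order $r(r-1)$ and $-1=g^{r(r-1)/2}$, the exponent $1+r(r-1)/2$ representing $p$ has $\gcd 2$ with $r(r-1)$ when $(r-1)/2$ is odd, yielding $\ord_{r^2}(p)=r(r-1)/2$. Let $N$ be this order (or its reduction by the degree of~$q$ over the prime field); then $N$ is odd, $\FF_{q^N}$ contains a primitive $r^2$-th root of unity, and $n=sN$ with $s$ odd produces an infinite supply of admissible odd~$n$. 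Crucially, the arithmetic of $-p$ places the Gauss sums $G(\chi)=\sum_{x\in\FF_{q^n}^*}\chi(x)\psi(\Tr(x))$ attached to multiplicative characters $\chi$ of $r$-power order on $\FF_{q^n}^*$ into the semiprimitive regime, in which Stickelberger-type identities yield $\abs{G(\chi)}=q^{n/2}$ with an explicit root-of-unity factor.

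Following the template of~\cite{Sch2019}, I would take $g$ to be constant on each coset of a carefully chosen subgroup $H\le\FF_{q^n}^*$, parametrised by coset values $(c_i)_{i=1}^{m}\in\FF_q^m$ with $m=[\FF_{q^n}^*:H]$. Multiplicative Fourier inversion on $\FF_{q^n}^*/H$ rewrites the target character sum as $1+\sum_{i}\psi(tc_i)S_i(a)$, where $S_i(a)=\sum_{x\in C_i}\psi(-\Tr(ax))$ is a linear combination of the Gauss sums above. The semiprimitive evaluation then bounds $|S_i(a)|$ by $q^{n/2}$ up to lower-order terms and fixes the row- and column-$\ell_2$-norms of the matrix $\bigl(S_i(a)\bigr)_{a,i}$ precisely, providing the structural input needed for the next step.

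The decisive step, which I expect to be the main obstacle, is to choose $(c_i)$ so that $\bigabs{\sum_i\psi(tc_i)S_i(a)}\le(1+\varepsilon)q^{n/2}$ simultaneously for all $q^n(q-1)$ pairs $(a,t)$. A second-moment calculation over uniformly random $c_i$ identifies $q^{n/2}$ as the correct target, but a direct Hoeffding-plus-union-bound estimate inflates the bound by an unavoidable factor of order $\sqrt{n\log q}$, spoiling the sharp constant. To recover constant $1$ one must, as in~\cite{Sch2019}, replace the union bound by a Spencer/Beck--Fiala-style partial-colouring iteration that rounds a fractional averaging solution to an $\FF_q$-valued vector $(c_i)$ while losing only lower-order error; the controlled row/column $\ell_2$-structure of $\bigl(S_i(a)\bigr)$ is precisely the input this rounding needs. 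Extending this discrepancy argument from the $\pm 1$-valued setting ($q=2$) to general $\FF_q$-valued variables, and controlling the error uniformly across all $q^n(q-1)$ bilinear values as $n\to\infty$, is the most delicate part of the proof.
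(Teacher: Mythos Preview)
Your plan has two genuine gaps, one of which is fatal as stated.

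\textbf{The constant-on-cosets ansatz cannot work.} You propose $g$ constant on the $m=[\FF_{q^n}^*:H]$ cosets, with the coset values $(c_i)$ as your only free parameters, and you ask that $\bigl|\sum_i\psi(tc_i)S_i(a)\bigr|\le(1+\varepsilon)q^{n/2}$ for all $(a,t)$. But at $a=0$ one has $S_i(0)=|C_i|=(q^n-1)/m$, so the constraint becomes $\bigl|\sum_i\psi(tc_i)\bigr|\le (1+\varepsilon)\,m\,q^{-n/2}$. For fixed $m$ and $n\to\infty$ this forces $\sum_i\psi(tc_i)=0$ for every $t\ne 0$, i.e.\ the multiset $\{c_i\}$ must be perfectly balanced over~$\FF_q$; since $m=r^e$ is coprime to $q$, this is impossible. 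No Spencer-type rounding can repair this: you have only $m$ degrees of freedom and the $a=0$ constraint alone rules out every choice. The paper resolves this by \emph{not} insisting that $g$ be constant on cosets everywhere. It takes a balanced, coset-constant function $f_T$ on a set $T$ consisting of $q(q-1)\lfloor m/(q(q-1))\rfloor$ cosets (so exact balance is achievable there), and on the small leftover $S$ (of size $\asymp q^n/m$) it lets $f_S$ vary pointwise. The discrepancy argument is then applied to the $|S|$ individual values of $f_S$, not to coset labels; with $|S|$ variables and $\sim q^{n+1}$ constraints, Spencer's theorem (iterated via a Beck-style multicolour partition) gives $|\widehat{f_S}(a,\lambda)|\lesssim q^{5/2}\sqrt{(\log m)/m}$, uniformly in $n$.

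\textbf{The Gauss sums are not in the semiprimitive regime.} The hypothesis that $-p$ is a primitive root modulo $r^2$ (hence modulo $r^e$) puts $p$ at \emph{index~$2$} in $(\ZZ/r^e\ZZ)^*$, and since $r\equiv 3\pmod 4$ one has $-1\notin\langle p\rangle$, so this is not the semiprimitive case and $G(\chi)/q^{n/2}$ is \emph{not} a root of unity. Langevin's index-$2$ evaluation gives $G(\chi)/q^{n/2}$ as an explicit unit-modulus element of $\QQ(\sqrt{-r})$; the paper then uses Davenport--Hasse together with Weyl's equidistribution theorem (crucially exploiting that this number is not a root of unity) to find infinitely many odd $n$ for which all the relevant $G(\chi)$ are simultaneously close to $-q^{n/2}$. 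This equidistribution step is essential and is absent from your outline. (A minor related point: for $q>2$ the bound $|\widehat g(a,t)|\le 1+\varepsilon$ does not by itself give $\mu(g)\le 1+\varepsilon$; one needs the specific real combination $\sum_{\lambda\ne 0}\overline{\eta(\lambda b)}\widehat g(\lambda a,\lambda)$, and it is the structured choice of $f_T$---not a generic discrepancy bound---that makes this combination collapse to a value in $\{-(q-1),0,1\}$ up to small error.)
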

\par
We list possible primes $r$ satisfying the conditions of Theorem~\ref{thm:main} for the first 15 primes $p$:
\[
\renewcommand{\arraystretch}{1.5}
\begin{array}{c||ccccccccccccccc}
p & 2 & 3 & 5 & 7 & 11 & 13 & 17 & 19 & 23 & 29 & 31 & 37 & 41 & 43 & 47\\\hline
r & 7 & 23 & 11 & 31 & 7 & 23 & 19 & 31 & 7 & 23 & 11 & 7 & 23 & 19 & 11
\end{array}
\]
For each prime $r$, there are $\phi(\phi(r^2))$ primitive roots modulo $r^2$ and by Dirichlet's theorem on primes in arithmetic progressions, each of the corresponding $\phi(\phi(r^2))$ congruence classes modulo $r^2$ contains a fraction of $1/\phi(r^2)$ of all primes. Hence, by taking a prime $r>3$ with $r\equiv 3\pmod 4$, the condition of Theorem~\ref{thm:main} is satisfied for all $p$ in a subset of the primes with density
\[
\frac{\phi(\phi(r^2))}{\phi(r^2)}=\frac{\phi(r-1)}{r},
\]
where $\phi$ is Euler's totient function. For example, for $2/7$ of all primes $p$, we can take $r=7$ in Theorem~\ref{thm:main}. 
\par
It is known from~\cite{Xie1987} and~\cite{Col1990} that there are infinitely many primes of the form $r=2\ell+1$, where $\ell\ge 3$ is an odd number with at most three prime factors. Let $r_k$ be the $k$-th prime of this form. By the Chinese Remainder Theorem, the density of primes $p$ such that the condition in Theorem~\ref{thm:main} is satisfied for one of the primes $r_1,\dots,r_k$ is $d_k$, where $d_k$ can be recursively defined by $d_1=\phi(r_1-1)/r_1$ and
\[
d_i=d_{i-1}+\frac{\phi(r_i-1)}{r_i}-d_{i-1}\cdot\frac{\phi(r_i-1)}{r_i}
\]
for all $i\ge 2$. Since $r_k-1$ has a bounded number of prime factors, $\phi(r_k-1)/r_k$ is bounded from below by some positive number and hence we have $\lim_{k\to\infty}d_k=1$. We therefore obtain the following corollary of Theorem~\ref{thm:main}.
\begin{corollary}
\label{cor:main}
We have $\lim_{n\to\infty}\mu_q(n)=1$ for all powers $q$ of a prime $p$ lying in a subset of the primes with density $1$. 
\end{corollary}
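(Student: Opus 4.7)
The plan is to apply Theorem~\ref{thm:main} separately for each member of an infinite sequence of auxiliary primes and to show that the union of the resulting sufficient conditions on $p$ exhausts a density-$1$ subset of the primes. I would begin by invoking the Xie--Coleman result cited in the excerpt to fix an infinite sequence $r_1<r_2<\cdots$ of primes of the form $r_i=2\ell_i+1$, where $\ell_i\ge 3$ is odd with at most three prime factors. Because $\ell_i$ is odd, one has $r_i\equiv 3\pmod{4}$, so each $r_i$ meets the parity hypothesis of Theorem~\ref{thm:main}.

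For a fixed $i$, the identity $\phi(r_i^2)=r_i(r_i-1)$ together with Dirichlet's theorem on primes in arithmetic progressions shows that the set of primes $p$ for which $-p$ is a primitive root modulo $r_i^2$ has density $\phi(r_i-1)/r_i$ in the primes. The Chinese Remainder Theorem applied to the pairwise coprime moduli $r_1^2,\dots,r_k^2$ renders these events independent in the sense of Dirichlet density, which yields the recursion for $d_k$ stated in the excerpt and, equivalently, the product form
\[
1-d_k=\prod_{i=1}^{k}\left(1-\frac{\phi(r_i-1)}{r_i}\right).
\]

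The last step is to show that this product tends to zero. Since $\ell_i$ is odd with at most three prime factors, all of its prime divisors are $\ge 3$, and hence
\[
\frac{\phi(\ell_i)}{\ell_i}\ge \left(1-\tfrac{1}{3}\right)\left(1-\tfrac{1}{5}\right)\left(1-\tfrac{1}{7}\right)=\frac{48}{105}.
\]
Combined with $\phi(r_i-1)=\phi(2\ell_i)=\phi(\ell_i)$ and $\ell_i\ge 3$, this gives a uniform lower bound $\phi(r_i-1)/r_i\ge c$ for some absolute constant $c>0$. Each factor of the product is therefore at most $1-c<1$, so the product decays geometrically to zero. Consequently the set of primes $p$ for which some $r_i$ satisfies the hypothesis of Theorem~\ref{thm:main} has density $1$ in the primes, and Theorem~\ref{thm:main} then delivers $\lim_{n\to\infty}\mu_q(n)=1$ for every prime power $q=p^m$ with $p$ in this set. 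The only non-routine ingredient is the Xie--Coleman theorem supplying the sequence $(r_i)$ with a bounded number of prime factors of $r_i-1$; everything else is the elementary density bookkeeping essentially sketched in the paragraph preceding the corollary, and I anticipate no serious obstacle.
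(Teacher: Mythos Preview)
Your proposal is correct and follows essentially the same approach as the paper: the argument is precisely the density bookkeeping given in the two paragraphs preceding the corollary, combining Dirichlet's theorem, the Chinese Remainder Theorem, and the Xie--Coleman supply of primes $r=2\ell+1$ with $\ell$ odd and $\Omega(\ell)\le 3$. The only difference is cosmetic: you make explicit the numerical lower bound $\phi(\ell_i)/\ell_i\ge (1-\tfrac13)(1-\tfrac15)(1-\tfrac17)$, whereas the paper simply observes that a bounded number of prime factors of $r_k-1$ forces $\phi(r_k-1)/r_k$ to be bounded away from zero.
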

\par
We shall see that the conclusion of Corollary~\ref{cor:main} can be proved for all prime powers $q$ if one can show that, for each prime $p$, there are infinitely many primes $r\equiv 3\pmod 4$ such that $-p$ is a primitive root modulo $r$. This is known to be true conditionally under the Generalised Riemann Hypothesis (GRH) and gives the following result.
\begin{theorem}
\label{thm:main_GRH}
Assume GRH. Then we have $\lim_{n\to\infty}\mu_q(n)=1$ for all prime powers $q$.
\end{theorem}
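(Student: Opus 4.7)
The plan is to invoke the reduction stated just before Theorem~\ref{thm:main_GRH}: it suffices to show that for every prime $p$ there are infinitely many primes $r \equiv 3 \pmod{4}$ with $r > 3$ such that $-p$ is a primitive root modulo~$r$. Once this is in hand, the same argument underlying Theorem~\ref{thm:main} applies (with the primitive-root condition relaxed from modulo $r^2$ to modulo $r$, as the authors assert suffices), immediately yielding $\lim_{n\to\infty}\mu_q(n)=1$ for every prime power $q=p^m$.

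The existence of such primes is a conditional consequence of Hooley's theorem on Artin's primitive root conjecture. For any integer $a$ which is neither $-1$ nor a perfect power, the truth of GRH for the Dedekind zeta functions of the Kummer-cyclotomic fields $\QQ(\zeta_\ell, a^{1/\ell})$ (ranging over primes $\ell$) implies that $a$ is a primitive root for a positive density of primes. This applies to $a=-p$ since $-p$ is neither $-1$ nor a perfect power for any prime $p$. To superimpose the congruence $r \equiv 3\pmod{4}$, I would use the standard refinement of Hooley's theorem to primes in arithmetic progressions: one simultaneously requires the Frobenius at $r$ to act on $\QQ(\zeta_4)$ in the class corresponding to $r\equiv 3\pmod 4$ and to act non-trivially on each tower $\QQ(\zeta_\ell, (-p)^{1/\ell})$. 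Under GRH this produces an explicit Artin-type density of admissible primes $r$.

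The only step requiring genuine verification is that this density is nonzero (indeed, we only need infinitely many such $r$, not positive density). The potential obstructions are the classical ones: an unexpected inclusion of $\QQ(i)$ in the Kummer tower $\bigcup_\ell \QQ(\zeta_\ell,(-p)^{1/\ell})$ that would tie the parity of $r\bmod 4$ to the splitting behaviour controlling the primitive-root condition. This is the main obstacle of the proof, but it is a finite and classical check on field intersections for $a=-p$, and one sees directly that no such global obstruction arises for any prime $p$. With this verified, Theorem~\ref{thm:main_GRH} follows from the reduction together with Hooley's conditional density estimate.
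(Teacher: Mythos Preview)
Your approach is correct and coincides with the paper's: both reduce Theorem~\ref{thm:main_GRH} to the existence, under GRH, of infinitely many primes $r>3$ with $r\equiv 3\pmod 4$ for which $-p$ is a primitive root modulo $r$, and then invoke Proposition~\ref{pro:construction} with $e=1$ together with~\eqref{eqn:bounds_mu} and $\mu_q(2)=1$.

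The only difference is in how the conditional existence of such $r$ is secured. You sketch Hooley's argument, refined to primes in the residue class $3\bmod 4$, and leave as a ``finite and classical check'' that the relevant Artin-type density is nonzero for every prime $p$. The paper bypasses this by quoting a result of Moree (Proposition~\ref{pro:primes}), which already gives the density explicitly as $\tfrac{A}{2}\bigl(1-(-1)^{(p-1)/2}/(p^2-p-1)\bigr)$ for odd $p$ and $A/2$ for $p=2$, manifestly positive in all cases. Your outline is not wrong, but the verification you defer is exactly the content of Moree's theorem; citing it is both cleaner and actually completes the argument, whereas your sketch still owes the reader the field-intersection computation showing no obstruction occurs.
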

\par
For the proof of our results we use a semiprobabilistic construction. We present this construction in the next section (Proposition~\ref{pro:construction}) and then show how our main results follow from this result. The proof that this construction gives the desired properties uses methods from number theory and discrepancy theory and the details are contained in Sections~\ref{sec:Gauss} and~\ref{sec:Discrepancy}. The overall structure of the proof is based on the idea of~\cite{Sch2019} to prove Theorem~\ref{thm:main} for~$q=2$. However, in the general case, several additional ideas are crucially involved.


\section{Proof overview}

For a function $g:\FF_q^n\to\FF_q$, we define the normalisation
\begin{equation}
\mu(g)=\frac{q^{n-1}(q-1)-N(g)}{q^{n/2-1}},   \label{eqn:normalisation}
\end{equation}
where $N(g)$ is the nonlinearity of $g$, given in~\eqref{eqn:def_nonlinearity}. Hence 
\[
\mu_q(n)=\min_g\mu(g),
\]
where the minimum is over all functions $g$ from~$\FF_q^n$ to $\FF_q$. For every $\e>0$, we shall identify functions $f:\FF_q^n\to\FF_q$, which satisfy $\mu(f)\le 1+\e$ when~$n$ is sufficiently large. The construction is semiprobabilistic; it mimics the partial spread construction of so-called bent functions~\cite{Dil1974}, but leaves some freedom, which will bring in probabilistic methods in the proof of our main results.  
\par
Henceforth we identify $\FF_q^n$ with the field $\FF_{q^n}$. Let $H$ be a (multiplicative) subgroup of $\FF_{q^n}^*$ of index $v$. Let $T$ be a union of
\[
q(q-1)\left\lfloor\frac{v}{q(q-1)}\right\rfloor
\]
cosets of $H$ such that, if the coset $aH$ is contained in $T$, then the coset $\lambda aH$ is contained in $T$ for each $\lambda\in\FF_q^*$. Put~$S=\FF_{q^n}\setminus T$. Note that $v$ is not divisible by $q$ and so $S\setminus \{0\}$ is a union of at least $1$ and at most $q^2-q-1$ cosets of~$H$. We consider functions $f:\FF_{q^n}\to\FF_q$ of the form
\begin{equation}
f(y)=\begin{cases}
f_T(y) & \text{for $y\in T$}\\
f_S(y) & \text{for $y\in S$},
\end{cases}   \label{eqn:def_f}
\end{equation}
where $f_T$ is a function from $T$ to $\FF_q$ and $f_S$ is a function from $S$ to $\FF_q$. The function $f_T$ is defined such that $f_T$ takes on every value of $\FF_q$ equally often and such that
\begin{equation}
f_T(\lambda ay)=f_T(y)\quad\text{for each $\lambda\in\FF_q^*$, each $a\in H$, and each $y\in T$}.   \label{eqn:def_F_T}
\end{equation}
That is, $f_T$ is constant on the cosets of $\FF_q^*$ and also constant on the cosets of $H$. The function $f_S$ will be determined later.
\par
Recall that $\ord_m(a)$ for integers $m$ and $a$ with $m>0$ and $\gcd(a,m)=1$ is the smallest positive integer $t$ such that $m\mid a^t-1$. Note that, if we fix~$v$, then for every multiple $n$ of $\ord_v(q)$, there exists a subgroup of $\FF_{q^n}^*$ of index~$v$. In particular, if $p$ is the characteristic of $\FF_q$, then $\ord_v(q)$ divides $\ord_v(p)$, and so such a subgroup exists for every multiple $n$ of $\ord_v(p)$.
\begin{proposition}
\label{pro:construction}
Let $e$ be a positive integer, let $p$ be the characteristic of~$\FF_q$, and suppose that $r>3$ is another prime such that $r\equiv 3\pmod 4$ and~$-p$ is a primitive root modulo $r^e$. Put $v=r^e$. Then there is an odd multiple $n$ of $\ord_v(p)$ and a function $f_S$ such that the function $f$ defined in~\eqref{eqn:def_f} satisfies
\[
\mu(f)\le 1+309\,q^{5/2}\,\sqrt{\frac{\log(2q^2v)}{v}}.
\]
\end{proposition}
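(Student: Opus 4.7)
The plan is to translate the inequality on $\mu(f)$ into a uniform bound on the Fourier--Walsh sums
\[
\widehat{F}(\alpha,b)=\sum_{y\in\FF_{q^n}}\psi(\alpha f(y))\,\psi_1(-by),\qquad \alpha\in\FF_q^*,\ b\in\FF_{q^n},
\]
where $\psi$ is a fixed nontrivial additive character of $\FF_q$ and $\psi_1(y)=\psi(\Tr(y))$ is its lift. The identity $\#\{y:f(y)=\Tr(\beta y)+\gamma\}=q^{n-1}+\tfrac1q\operatorname{Re}\sum_{\alpha\neq0}\psi(-\alpha\gamma)\widehat{F}(\alpha,\alpha\beta)$ reduces the target bound $\mu(f)\leq1+\varepsilon$ to controlling $\max_{\alpha,b}|\widehat{F}(\alpha,b)|$ by roughly $q^{n/2}$ up to the claimed error, which we achieve by splitting $\widehat{F}=W_T+W_S$ according to $\FF_{q^n}=T\sqcup S$ and handling the two parts separately.

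\textbf{Gauss-sum analysis of $W_T$.} Since $T$ is a union of cosets of the group $\FF_q^*H$ and $f_T$ is constant on each such coset, inverting the indicator of a super-coset $D=d_D\,\FF_q^*H$ through the $v':=|\FF_{q^n}^*/(\FF_q^*H)|$ multiplicative characters of $\FF_{q^n}^*$ that are trivial on $\FF_q^*H$ gives
\[
W_T(\alpha,b)=\frac{1}{v'}\sum_{\chi\neq\chi_0}g(\chi)\,\overline{\chi}(-b)\,\Phi_\alpha(\chi),\qquad \Phi_\alpha(\chi)=\sum_{D\subset T}\psi(\alpha f_T(D))\,\overline{\chi}(d_D),
\]
where $g(\chi)=\sum_{y\in\FF_{q^n}^*}\chi(y)\psi_1(y)$ is the Gauss sum. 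The hypothesis that $-p$ is a primitive root modulo $v=r^e$ puts every nontrivial such $\chi$ into the \emph{semiprimitive case}, in which Section~\ref{sec:Gauss} evaluates $g(\chi)$ exactly: $|g(\chi)|=q^{n/2}$ with an explicit sign $\eta(\chi)\in\{\pm1\}$. Because $W_T(\alpha,\cdot)$ is constant on $\FF_q^*H$-cosets (by the $\FF_q^*$- and $H$-invariance of $f_T$ and $T$), this reduces the analysis of $W_T$ to a character-sum problem on the finite group $\FF_{q^n}^*/(\FF_q^*H)$ of order $v'$, from which a careful choice of $T$ and $f_T$ yields $|W_T(\alpha,b)|\leq q^{n/2}$ plus a small explicit error.

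\textbf{Probabilistic control of $W_S$ via discrepancy.} Because $|S|\leq q^2|H|\leq q^{n+2}/v$, choosing $f_S(y)$ independently and uniformly in $\FF_q$ makes each $W_S(\alpha,b)$ a mean-zero sum of independent unit-modulus summands, so a complex Hoeffding inequality gives $|W_S(\alpha,b)|\leq C\sqrt{|S|\log(1/\delta)}$ except on a set of probability $\delta$. A naive union bound over the $(q-1)q^n$ pairs $(\alpha,b)$ would cost a factor of $\log(q^{n+1})\sim n\log q$, which is prohibitively large since $n$ must be a multiple of $\ord_v(p)=r^{e-1}(r-1)/2$ and is therefore at least of order $v$. \emph{The main obstacle is to replace this logarithm by $\log(2q^2v)$.} The plan is to do so by expanding $W_S$ through the same multiplicative-character inversion used for $W_T$: the $b$-dependence then factors through Gauss sums indexed by the characters in $H^\perp$, collapsing the effective index set over which one must control the randomness to $O(q^2v)$ entries, on which the chaining/concentration methods developed in Section~\ref{sec:Discrepancy} can be applied to produce an $f_S$ with $|W_S(\alpha,b)|\leq C\,q^{n/2-1}\cdot q^{5/2}\sqrt{\log(2q^2v)/v}$ uniformly in $(\alpha,b)$. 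Finally, because $\ord_v(p)=r^{e-1}(r-1)/2$ is itself odd when $r\equiv 3\pmod 4$, we may take $n$ equal to $\ord_v(p)$; combining the two bounds on $W_T$ and $W_S$ then gives the stated estimate on $\mu(f)$.
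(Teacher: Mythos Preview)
Your overall decomposition $\widehat f=\widehat f_T+\widehat f_S$ matches the paper, but the treatment of the $T$-part contains a genuine error that breaks the argument.

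You assert that the hypothesis on $-p$ places the relevant Gauss sums in the semiprimitive case with $g(\chi)=\eta(\chi)\,q^{n/2}$ for some sign $\eta(\chi)\in\{\pm1\}$. This is false here. Since $r\equiv 3\pmod 4$, the element $-1$ is a nonsquare modulo $r^d$, while $\langle p\rangle$ consists precisely of the squares modulo $r^d$; hence $-1\notin\langle p\rangle$ and the Gauss sums are \emph{not} real. They lie in $\QQ(\sqrt{-r})$ and have the form
\[
G(\chi)=\tfrac12\,(a\pm b\sqrt{-r})\,p^{(k-h)/2},
\]
as in Lemma~\ref{lem:gauss_sum_index_two}. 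The paper therefore cannot simply read off a sign; instead it uses the Davenport--Hasse relation together with Weyl's equidistribution theorem (Lemma~\ref{lem:Gauss_sum_error}) to find odd integers $s$ for which $(-G(\tau))^s$ has argument arbitrarily close to $0$, so that $G(\chi)/q^{n/2}$ is uniformly close to $-1$ for all nontrivial $\chi$ of order dividing~$v$. Only then does Lemma~\ref{lem:gauss_sum_fourier} yield the bound $1+\e vq$ on the $T$-contribution. In particular your proposal to take $n=\ord_v(p)$ cannot work: one must pass to a suitable (and in general large) odd multiple $n=s\cdot\ord_v(p)$, and the existence of such an $s$ is exactly what the equidistribution step provides.

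For the $S$-part, your plan to recover the logarithm $\log(2q^2v)$ by a multiplicative-character inversion is not viable as stated: since $f_S$ is not constant on cosets of $H$, the sum $\sum_{y\in S}\psi(\alpha f_S(y))\psi_1(-by)$ does not factor through characters in $H^\perp$, and the $b$-dependence does not collapse to $O(q^2v)$ values. The paper avoids this difficulty altogether by using Spencer's deterministic discrepancy bound (Lemma~\ref{lem:lin_forms}, via Theorem~\ref{thm:set_partition}), whose key feature is the factor $\sqrt{N\log(M/N)}$ rather than $\sqrt{N\log M}$. With $N=\abs{S}\ge q^n/v$ and $M=q^{n+1}$ sets $Y_{a,z}=\{y\in S:\Tr(ay)=z\}$, the ratio $M/N$ is at most $qv$, and this is precisely what produces the $\log(2q^2v)$ in Proposition~\ref{pro:bound_f_S}. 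A random $f_S$ with a Hoeffding-type bound would only give $\sqrt{N\log M}$, which is too large by a factor of order $\sqrt{n}$.
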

\par
\begin{remark*}
With the notation as in Proposition~\ref{pro:construction}, we have that $-1$ is a nonsquare modulo $v$, which implies that
\[
\ord_v(p)=\tfrac{1}{2}\phi(v)=\tfrac{1}{2}(r-1)r^{e-1}.
\]
Hence $\ord_v(p)$ is odd. Therefore $f$ is a function on an extension of $\FF_q$ of odd degree.
\end{remark*}
\par
Before we prove Proposition~\ref{pro:construction} we shall first deduce Theorems~\ref{thm:main} and~\ref{thm:main_GRH} from Proposition~\ref{pro:construction}. Recall from elementary number theory (see~\cite[p.~102]{NivZucMon1991}, for example) that the condition in Theorem~\ref{thm:main} implies that $-p$ is a primitive root modulo $r^e$ for all positive integers~$e$. We can therefore take $e$, and hence $v$, in Proposition~\ref{pro:construction} arbitrarily large. Using~\eqref{eqn:bounds_mu} and~$\mu_q(2)=1$, we then obtain Theorem~\ref{thm:main}.
\par
To deduce Theorem~\ref{thm:main_GRH}, we use the following special case of a result by Moree~\cite{Mor2008}.
\par
\begin{proposition}[{\cite[Theorem~1.3]{Mor2008}}]
\label{pro:primes}
Assume GRH. Let $p$ be a prime. Then the density of primes $r\equiv 3\pmod 4$ such that $-p$ is a primitive root modulo $r$ is 
\[
\frac{A}{2}\left(1-(-1)^{\frac{p-1}{2}}\frac{1}{p^2-p-1}\right)
\]
for odd $p$ and $A/2$ for $p=2$, where
\[
A=\prod_{\text{$r$ prime}}\left(1-\frac{1}{r(r-1)}\right)=0.373955\dots
\]
is Artin's constant
\end{proposition}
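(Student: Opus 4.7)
The strategy I would follow is Hooley's classical approach to Artin's conjecture on primitive roots, modified to incorporate the congruence condition $r \equiv 3 \pmod 4$. For each squarefree positive integer $d$, set $K_d = \QQ(\zeta_{4d}, (-p)^{1/d})$. A prime $r$ unramified in $K_d$ satisfies $r \equiv 3 \pmod 4$ and $-p$ is not an $\ell$-th power modulo $r$ for every prime $\ell \mid d$ exactly when its Frobenius class in $\Gal(K_d/\QQ)$ lies in a prescribed subset $C_d$ (roughly: it must act as complex conjugation on $\QQ(i) \subset \QQ(\zeta_4)$ and must not fix any $(-p)^{1/\ell}$ for $\ell \mid d$). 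Möbius inversion on the prime divisors of $r-1$ gives
\[
\pi^*(x) = \sum_{d\ \text{squarefree}} \mu(d)\, M_d(x),
\]
where $\pi^*(x)$ counts the primes in question up to $x$ and $M_d(x)$ counts primes $r \le x$ whose Frobenius in $\Gal(K_d/\QQ)$ lies in $C_d$.

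Under GRH, the effective Chebotarev density theorem of Lagarias--Odlyzko gives
\[
M_d(x) = \frac{|C_d|}{[K_d:\QQ]}\,\pi(x) + O\!\left(x^{1/2}\log(d x [K_d:\QQ])\right)
\]
uniformly in $d$. This is exactly the input needed to justify Hooley's truncation: one splits the Möbius sum at roughly $d \le \sqrt{x}/\log^2 x$, uses the GRH error bound for the head, and handles the tail by elementary upper bounds on $1/[K_d:\QQ]$. The outcome is that the natural density exists and equals
\[
\delta = \sum_{d\ \text{squarefree}} \mu(d)\,\frac{|C_d|}{[K_d:\QQ]}.
\]

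What I expect to be the main obstacle is the explicit computation of the degrees $n_d := [K_d:\QQ]$ and the sizes $|C_d|$, and then the repackaging of $\delta$ as an Euler product. The subtlety is \emph{entanglement}: the Kummer extension $\QQ((-p)^{1/d})$ is typically not linearly disjoint from $\QQ(\zeta_{4d})$, because $-p$ may already be a square inside the cyclotomic tower. Quadratic reciprocity, in the form $\QQ(\sqrt{p^*}) \subset \QQ(\zeta_p)$ with $p^* = (-1)^{(p-1)/2} p$, together with $\QQ(i) \subset \QQ(\zeta_4)$, yields $\QQ(\sqrt{-p}) \subset \QQ(\zeta_{4p})$ for every prime $p$. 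This single quadratic overlap forces $n_d$ to drop by a factor of two whenever $2p \mid d$, and it is precisely this correction that distinguishes the local factor at the prime $p$ from the generic one. For odd $p$ the sign of the correction depends on $p \pmod 4$, and a direct calculation shows that it assembles into the factor $1 - (-1)^{(p-1)/2}/(p^2 - p - 1)$; for $p = 2$ the additional containments $\QQ(\sqrt{-1}), \QQ(\sqrt{2}) \subset \QQ(\zeta_8)$ rearrange the factor at $2$ and leave only the bare $A/2$.

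Once the $n_d$ and $|C_d|$ are in hand, multiplicativity of $\mu(d)|C_d|/n_d$ across primes away from $2p$ lets one factor $\delta$ as an Euler product. The generic local factor at a prime $\ell \nmid 2p$ is $1 - 1/\ell(\ell-1)$, which assembles to Artin's constant $A$; the factor of $1/2$ comes from the single congruence condition at $2$ (inertness in $\QQ(i)$); and the correction at $p$ comes from the quadratic entanglement described above. The remaining steps, namely the truncation of the Möbius sum and the tail estimation, follow Hooley's template essentially verbatim, so the real work of the proof is the bookkeeping of these quadratic entanglements at the prime $p$.
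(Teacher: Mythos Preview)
The paper does not prove this proposition at all; it is quoted as a special case of \cite[Theorem~1.3]{Mor2008} and used as a black box to deduce Theorem~\ref{thm:main_GRH}. Your proposal is a reasonable outline of how such results are actually established---Moree's argument, building on Hooley and Lenstra, does proceed via inclusion--exclusion over Kummer--cyclotomic fields together with a GRH-effective Chebotarev bound, and the correction factor at $p$ does arise from the quadratic entanglement $\QQ(\sqrt{-p})\subset\QQ(\zeta_{4p})$---but this goes well beyond what the present paper does. For the purposes of this paper, the ``proof'' is simply the citation.
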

\par
Now for fixed $q$, Proposition~\ref{pro:primes} implies, conditional on GRH, the existence of infinitely many primes~$r$ for which we can apply Proposition~\ref{pro:construction} with $e=1$. Using again~\eqref{eqn:bounds_mu} and~$\mu_q(2)=1$, we then obtain Theorem~\ref{thm:main_GRH}.
\par
To prove Proposition~\ref{pro:construction}, we shall turn the problem of estimating the nonlinearity of a function into a problem of estimating certain character sums. Recall that, for a finite field extension $K/F$, the \emph{trace function} $\Tr_{K/F}:K\to F$ is given by
\[
\Tr_{K/F}(y)=\sum_{\sigma\in\Gal(K/F)}\sigma(y)
\]
for each $y\in K$. We define $\eta$ and $\psi$ to be the canonical additive characters of $\FF_q$ and $\FF_{q^n}$, respectively. Denoting by $p$ the characteristic of $\FF_q$, we have
\[
\eta(y)=\exp(2\pi i\Tr_{\FF_q/\FF_p}(y)/p)
\]
for each $y\in\FF_q$ and
\begin{equation}
\psi(y)=\eta(\Tr_{\FF_{q^n}/\FF_q}(y))   \label{eqn:def_psi}
\end{equation}
for each $y\in\FF_{q^n}$.
\par
The \emph{Fourier transform} of a function $g:\FF_{q^n}\to\FF_q$ is defined to be the function $\widehat g:\FF_{q^n}\times \FF_q\to\CC$ given by
\[
\widehat g(a,\lambda)=\frac{1}{q^{n/2}}\sum_{y\in \FF_{q^n}}\eta(\lambda g(y))\overline{\psi(ay)}
\]
for each $a\in\FF_{q^n}$ and each $\lambda\in\FF_q$.
\par
The following lemma gives the relationship between the nonlinearity of a function and its Fourier transform.
\begin{lemma}
\label{lem:nonlinerity_char_sum}
For every function $g:\FF_{q^n}\to \FF_q$ we have 
\begin{equation}
\mu(g)=\max_{a\in\FF_{q^n}}\max_{b\in\FF_q}\;\sum_{\lambda\in\FF_q^*}\overline{\eta(\lambda b)}\;\widehat g(\lambda a,\lambda).   \label{eqn:nonlinearity_char_sum}
\end{equation}
\end{lemma}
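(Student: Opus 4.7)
The plan is to unpack $N(g)$ directly and reinterpret it in terms of the Fourier transform. First I would parametrise the affine functions: since the $\FF_q$-linear maps $\FF_{q^n}\to\FF_q$ are exactly the maps $y\mapsto \Tr_{\FF_{q^n}/\FF_q}(ay)$ for $a\in\FF_{q^n}$, every affine function $h:\FF_{q^n}\to\FF_q$ has the form $h(y)=\Tr_{\FF_{q^n}/\FF_q}(ay)+b$ with $a\in\FF_{q^n}$ and $b\in\FF_q$, and distinct pairs $(a,b)$ give distinct functions. Hence the minimum in the definition of $N(g)$ is taken over such pairs.

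Next I would rewrite $d(g,h)=q^n-\#\{y:g(y)=h(y)\}$ and detect the equality $g(y)=h(y)$ using the orthogonality of the canonical additive character $\eta$ of $\FF_q$,
\[
\1[c=0]=\frac{1}{q}\sum_{\lambda\in\FF_q}\eta(\lambda c)\quad\text{for each $c\in\FF_q$.}
\]
Applying this with $c=g(y)-h(y)$ and summing over $y$, the contribution of $\lambda=0$ is $q^{n-1}$. For each $\lambda\ne0$, I use $\FF_q$-linearity of the trace together with~\eqref{eqn:def_psi} to factor
\[
\eta(-\lambda h(y))=\eta(-\lambda b)\,\eta(-\Tr_{\FF_{q^n}/\FF_q}(\lambda ay))=\overline{\eta(\lambda b)}\,\overline{\psi(\lambda ay)},
\]
so that the inner sum collapses to $q^{n/2}\,\overline{\eta(\lambda b)}\,\widehat g(\lambda a,\lambda)$ by the definition of the Fourier transform.

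Combining these computations gives
\[
d(g,h)=q^{n-1}(q-1)-q^{n/2-1}\sum_{\lambda\in\FF_q^*}\overline{\eta(\lambda b)}\,\widehat g(\lambda a,\lambda),
\]
and in particular the inner character sum is a real number for each $(a,b)$. Minimising $d(g,h)$ over affine $h$ is therefore the same as maximising the right-hand sum over $(a,b)\in\FF_{q^n}\times\FF_q$, and dividing by $q^{n/2-1}$ as in the definition~\eqref{eqn:normalisation} yields the claimed identity for $\mu(g)$.

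There is no real obstacle here; this is a routine Fourier-analytic rewriting. The only points requiring a little care are bookkeeping of the sign conventions (in particular that $\overline{\psi(\lambda ay)}$ in the definition of $\widehat g$ matches the $\eta(-\lambda\Tr(ay))$ coming from the expansion of $\eta(-\lambda h(y))$) and the observation that, although the individual summands $\overline{\eta(\lambda b)}\,\widehat g(\lambda a,\lambda)$ are a priori complex, the full sum over $\lambda\in\FF_q^*$ is automatically real because it equals the real quantity $(q^{n-1}(q-1)-d(g,h))/q^{n/2-1}$.
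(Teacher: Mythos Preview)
Your proposal is correct and follows essentially the same approach as the paper: both parametrise affine functions via $h_{a,b}(y)=\Tr_{\FF_{q^n}/\FF_q}(ay)+b$, expand $d(g,h_{a,b})$ using orthogonality of $\eta$, and identify the resulting character sum with $\widehat g(\lambda a,\lambda)$ to obtain the displayed formula for $d(g,h_{a,b})$. Your additional remark that the sum over $\lambda\in\FF_q^*$ is automatically real is a nice touch the paper leaves implicit.
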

\begin{proof}
For every $z\in\FF_q$, we have
\[
\frac{1}{q}\sum_{\lambda\in\FF_q}\eta(\lambda z)=\begin{cases}
1 & \text{for $z=0$}\\
0 & \text{otherwise}.
\end{cases}
\]
Therefore, for every function $h:\FF_{q^n}\to\FF_q$, we have
\begin{align*}
d(g,h)&=q^n-\frac{1}{q}\sum_{y\in\FF_{q^n}}\sum_{\lambda\in\FF_q}\eta(\lambda(g(y)-h(y)))\\
&=q^{n-1}(q-1)-\frac{1}{q}\sum_{\lambda\in\FF_q^*}\sum_{y\in\FF_{q^n}}\eta(\lambda(g(y)-h(y))).
\end{align*}
Now notice that the affine functions from $\FF_{q^n}$ to $\FF_q$ are precisely the $q^{n+1}$ functions $h_{a,b}$ for $a\in\FF_{q^n}$ and $b\in\FF_q$, given by
\[
h_{a,b}(y)=\Tr_{\FF_{q^n}/\FF_q}(ay)+b.
\]
Therefore 
\begin{align*}
d(g,h_{a,b})&=q^{n-1}(q-1)-\frac{1}{q}\sum_{\lambda\in\FF_q^*}\overline{\eta(\lambda b)}\sum_{y\in\FF_{q^n}}\eta(\lambda g(y))\overline{\psi(\lambda ay)}\\
&=q^{n-1}(q-1)-q^{n/2-1}\sum_{\lambda\in\FF_q^*}\overline{\eta(\lambda b)}\;\widehat{g}(\lambda a,\lambda)
\end{align*}
and the lemma follows from the definition~\eqref{eqn:def_nonlinearity} of the nonlinearity of $g$ and the normalisation~\eqref{eqn:normalisation}.
\end{proof}
\par
The strategy for our proof of Proposition~\ref{pro:construction} is to apply Lemma~\ref{lem:nonlinerity_char_sum} to the function $f$ appearing in Proposition~\ref{pro:construction}. We then bound the contributions to $\widehat f(a,\lambda)$ coming from $f_T$ and $f_S$ separately. Accordingly we define
\begin{align*}
\widehat f_T(a,\lambda)&=\frac{1}{q^{n/2}}\sum_{y\in T}\eta(\lambda f_T(y))\overline{\psi(ay)}\\
\widehat f_S(a,\lambda)&=\frac{1}{q^{n/2}}\sum_{y\in S}\eta(\lambda f_S(y))\overline{\psi(ay)},
\end{align*}
so that $\widehat f(a,\lambda)=\widehat f_T(a,\lambda)+\widehat f_S(a,\lambda)$ for all $a\in\FF_{q^n}$ and all $\lambda\in\FF_q$. Proposition~\ref{pro:construction} will then follow in a straightforward way from Lemma~\ref{lem:nonlinerity_char_sum} and the forthcoming Propositions~\ref{pro:bound_f_T} and~\ref{pro:bound_f_S}.


\section{The function $f_T$}
\label{sec:Gauss}

Recall that $H$ is a subgroup of $\FF_{q^n}^*$ of index $v$ and $T$ is a union of cosets of~$\FF_q^*$ and also a union of cosets of $H$. By definition, the function $f_T:T\to\FF_q$ takes on every value of $\FF_q$ equally often and is constant on cosets of $\FF_q^*$ and constant on cosets of $H$, as given in~\eqref{eqn:def_F_T}.
\par
For a multiplicative character $\chi$ of $\FF_{q^n}$, the \emph{Gauss sum} $G(\chi)$ is defined to be
\[
G(\chi)=\sum_{y\in \FF_{q^n}^*}\chi(y)\psi(y),
\]
where as before $\psi$ is the canonical additive character of $\FF_{q^n}$. It is well known that $\abs{G(\chi)}=q^{n/2}$ if $\chi$ is nontrivial (which means that $\chi(y)\ne 1$ for some $y\in\FF_{q^n}^*$)~\cite[Theorem 5.11]{LidNie1997}.
\par
Our starting point for the analysis of $\widehat f_T$ is the following lemma.
\par
\begin{lemma}
\label{lem:gauss_sum_fourier}
Let $\e>0$ and suppose that, for all nontrivial multiplicative characters $\chi$ of $\FF_{q^n}$ of order dividing~$v$, we have
\[
\biggabs{\frac{G(\chi)}{q^{n/2}}+1}\le\e.
\]
Then we have
\[
\sum_{\lambda\in\FF_q^*}\overline{\eta(\lambda b)}\;\widehat f_T(\lambda a,\lambda)\le 1+\e vq
\]
for all $a\in\FF_{q^n}$ and all $b\in\FF_q$.
\end{lemma}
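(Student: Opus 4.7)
The plan is to expand $\widehat f_T(\lambda a,\lambda)$ by multiplicative character orthogonality on $\FF_{q^n}^*/H$ and then substitute the Gauss sum estimate coset by coset. The case $a=0$ is immediate: since $f_T$ takes every value of $\FF_q$ equally often, we get $\widehat f_T(0,\lambda)=0$ for all $\lambda\in\FF_q^*$, so the conclusion is trivial. Assume henceforth $a\neq 0$.

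Let $B$ be a set of representatives for the $H$-cosets contained in $T$, and let $c_\beta\in\FF_q$ denote the constant value of $f_T$ on $\beta H$. Using $\1[y\in\beta H]=v^{-1}\sum_{\chi^v=1}\chi(y)\overline{\chi(\beta)}$ for $y\in\FF_{q^n}^*$ and the substitution $w=-\lambda ay$, I first obtain
\[
\sum_{y\in\beta H}\overline{\psi(\lambda ay)}=\frac{1}{v}\sum_{\chi^v=1}\overline{\chi(-\lambda a\beta)}\,G(\chi).
\]
Substituting $G(\chi_0)=-1$ and $G(\chi)=q^{n/2}(-1+\delta_\chi)$ with $|\delta_\chi|\le\e$ for nontrivial $\chi^v=1$, and regrouping the ``$-1$'' pieces via $\sum_{\chi^v=1}\overline{\chi(y)}=v\,\1[y\in H]$, the expression for $\widehat f_T(\lambda a,\lambda)$ splits into three sums over $\beta\in B$. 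The $\lambda$-independent piece, weighted by $(q^{n/2}-1)/v$, vanishes because the equi-distribution of $f_T$ forces $\sum_{\beta\in B}\eta(\lambda c_\beta)=0$ for every $\lambda\neq 0$. The second piece isolates the unique $\beta\in B$ (if any) with $\beta H=-(\lambda a)^{-1}H$; writing $I=\1[a^{-1}H\subseteq T]\in\{0,1\}$ and $c^*=c_{a^{-1}H}$ when $I=1$, this contributes $-I\,\eta(\lambda c^*)$ to $\widehat f_T(\lambda a,\lambda)$. Crucially, both $I$ and $c^*$ are \emph{independent of $\lambda\in\FF_q^*$}, because $-\lambda^{-1}a^{-1}H$ ranges over the $\FF_q^*$-orbit of $a^{-1}H$ as $\lambda$ varies and both $T$ and $f_T$ are $\FF_q^*$-invariant by construction. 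The third piece is the error. The net identity reads
\[
\widehat f_T(\lambda a,\lambda)=-I\,\eta(\lambda c^*)+\frac{1}{v}\sum_{\beta\in B}\eta(\lambda c_\beta)\sum_{\chi\neq\chi_0,\,\chi^v=1}\overline{\chi(-\lambda a\beta)}\,\delta_\chi.
\]

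To finish, I multiply by $\overline{\eta(\lambda b)}$ and sum over $\lambda\in\FF_q^*$. The leading contribution $-I\sum_{\lambda\in\FF_q^*}\eta(\lambda(c^*-b))$ equals $-I(q-1)$ when $c^*=b$ and $I$ otherwise, hence is at most $1$ in either case. For the error, the bounds $|\delta_\chi|\le\e$ and $|B|\le v$ give
\[
\biggabs{\frac{1}{v}\sum_{\beta\in B}\eta(\lambda c_\beta)\sum_{\chi\neq\chi_0,\,\chi^v=1}\overline{\chi(-\lambda a\beta)}\,\delta_\chi}\le\frac{|B|(v-1)\e}{v}\le v\e
\]
for every $\lambda\in\FF_q^*$, so summing over the $q-1$ choices of $\lambda$ bounds the total error by $(q-1)v\e<qv\e$. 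Adding the two contributions yields $1+\e vq$, as claimed. The main obstacle I anticipate is the bookkeeping needed to isolate the single $-I\,\eta(\lambda c^*)$ term cleanly; its importance is that the $\FF_q^*$-invariance built into $f_T$ forces the Gauss-sum main terms, which a priori could contribute $O(q)$ per $\lambda$, to collapse into one $\lambda$-independent expression that loses only $O(1)$ after summation.
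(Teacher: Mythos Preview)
Your argument is correct and follows essentially the same route as the paper's proof: both expand the coset sums via multiplicative character orthogonality on $\FF_{q^n}^*/H$, write $G(\chi)=q^{n/2}(-1+\delta_\chi)$, use the balancing $\sum_{\beta}\eta(\lambda c_\beta)=0$ to kill the trivial-character contribution, exploit the $\FF_q^*$-invariance of $T$ and $f_T$ to make the surviving main term $\lambda$-independent, and finish with the triangle inequality on the error. The only cosmetic slip is that the weight on the vanishing piece should be $(q^{n/2}-1)/(q^{n/2}v)$ rather than $(q^{n/2}-1)/v$, but since that sum is zero this is harmless.
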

\begin{proof}
Since $f_T$ takes on every value of $\FF_q$ equally often, we have $\widehat f_T(0,\lambda)=0$ for each $\lambda\in\FF_q^*$. Hence we may assume that $a\in\FF_{q^n}^*$. Let~$R$ be a set of representatives of the cosets of $H$ belonging to $T$. For the moment fix $\lambda\in\FF_q^*$. Then we have
\begin{align}
q^{n/2}\,\widehat f_T(\lambda a,\lambda)&=\sum_{y\in T}\eta(\lambda f_T(y))\overline{\psi(\lambda ay)}   \nonumber\\
&=\sum_{z\in R}\sum_{x\in H}\eta(\lambda f_T(z))\overline{\psi(\lambda axz)}   \nonumber\\
&=\sum_{z\in R}\eta(\lambda f_T(z))\sum_{y\in\FF_{q^n}}\1_H(y)\overline{\psi(\lambda ayz)},   \label{eqn:sigma_f}
\end{align}
where $\1_H$ is the indicator of $H$ on $\FF_{q^n}$, so that
\[
\1_H(y)=\begin{cases}
1 & \text{for $y\in H$}\\
0 & \text{otherwise}.
\end{cases}
\]
Let $\chi$ be a multiplicative character of $\FF_{q^n}$ of order $v$. Then
\begin{equation}
\1_H(y)=\frac{1}{v}\sum_{j=0}^{v-1}\chi^j(y)\quad\text{for each $y\in \FF_{q^n}^*$}   \label{eqn:Fourier_indicator}
\end{equation}
and for all $c\in\FF_{q^n}^*$ we have
\begin{align*}
\sum_{y\in \FF_{q^n}}\1_H(y)\overline{\psi(cy)}&=\frac{1}{v}\sum_{j=0}^{v-1}\sum_{y\in\FF_{q^n}^*}\chi^j(y)\overline{\psi(cy)}\\
&=\frac{1}{v}\sum_{j=0}^{v-1}\chi^j(c^{-1})\sum_{y\in\FF_{q^n}^*}\chi^j(y)\overline{\psi(y)}\\
&=\frac{1}{v}\sum_{j=0}^{v-1}\chi^j(c)\sum_{y\in\FF_{q^n}^*}\overline{\chi^j(y)\psi(y)}\\
&=\frac{1}{v}\sum_{j=0}^{v-1}\chi^j(c)\overline{G(\chi^j)}.
\end{align*}
Substitute into~\eqref{eqn:sigma_f} to obtain
\[
\widehat f_T(\lambda a,\lambda)=\frac{1}{q^{n/2}}\;\frac{1}{v}\sum_{z\in R}\eta(\lambda f_T(z))\sum_{j=0}^{v-1}\chi^j(\lambda az)\overline{G(\chi^j)}.
\]
Now write $G(\chi^j)=q^{n/2}(-1+\gamma_j)$, so that $\abs{\gamma_j}\le \e$ for all $j\in\{1,\dots,v-1\}$ by our assumption. Since $\lambda\in\FF_q^*$ and so
\[
\sum_{z\in R}\eta(\lambda f_T(z))=0
\]
by the definition of $f_T$, we obtain
\[
\widehat f_T(\lambda a,\lambda)=M(a,\lambda)+E(a,\lambda),
\]
where
\begin{align*}
M(a,\lambda)&=-\frac{1}{v}\sum_{z\in R}\eta(\lambda f_T(z))\sum_{j=0}^{v-1}\chi^j(\lambda az)\\
E(a,\lambda)&=\frac{1}{v}\sum_{z\in R}\eta(\lambda f_T(z))\sum_{j=0}^{v-1}\chi^j(\lambda az)\,\overline{\gamma_j}.
\end{align*}
From~\eqref{eqn:Fourier_indicator} we find that
\[
M(a)=-\sum_{z\in R}\eta(\lambda f_T(z))\,\1_H(\lambda az).
\]
Since $f_T$ is constant on cosets of $H$ by definition~\eqref{eqn:def_F_T}, we find that
\[
M(a)=\begin{cases}
-\eta(\lambda f_T((\lambda a)^{-1})) & \text{for $(\lambda a)^{-1}\in T$}\\
0                        & \text{otherwise}.
\end{cases}
\]
Since $a^{-1}\in T$ if and only if $(\lambda a)^{-1}\in T$ and since~$f_T$ is constant on cosets of $\FF_q^*$ by definition~\eqref{eqn:def_F_T}, we obtain
\[
M(a)=\begin{cases}
-\eta(\lambda f_T(a^{-1})) & \text{for $a^{-1}\in T$}\\
0                        & \text{otherwise}.
\end{cases}
\]
Hence, for all $b\in\FF_q$, we have
\[
\sum_{\lambda\in\FF_q^*}\overline{\eta(\lambda b)}\,M(a,\lambda)=\begin{cases}
-(q-1) & \text{for $a^{-1}\in T$ and $f_T(a^{-1})=b$}\\
1      & \text{for $a^{-1}\in T$ and $f_T(a^{-1})\ne b$}\\
0      & \text{otherwise}.
\end{cases}
\]
On the other hand, by the triangle inequality we can bound $\abs{E(a,\lambda)}$ by $\e v$ for all $\lambda\in\FF_q^*$ and therefore obtain by the triangle inequality
\[
\Biggabs{\sum_{\lambda\in\FF_q^*}\overline{\eta(kb)}\,E(a,\lambda)}\le \e vq,
\]
as required.
\end{proof}
\par
The following explicit evaluation of certain Gauss sums~\cite[Proposition~4.2]{Lan1997} (see also~\cite[Theorem~4.1]{YanXia2010}) will help us to control the error term in Lemma~\ref{lem:gauss_sum_fourier}.
\par
\begin{lemma}[{\cite[Proposition~4.2]{Lan1997}}]
\label{lem:gauss_sum_index_two}
Let $d$ be a positive integer, let $p$ be a prime, and suppose that $r>3$ is another prime such that $r\equiv 3\pmod 4$ and $-p$ is a primitive root modulo $r^d$. Write $k=\phi(r^d)/2$, let $\tau$ be a multiplicative character of $\FF_{p^k}$ of order~$r^d$, and let $h$ be the class number of $\QQ(\sqrt{-r})$. Then
\[
G(\tau)=\frac{1}{2}(a+b\sqrt{-r})p^{(k-h)/2},
\]
where $a$ and $b$ are integers satisfying $a,b\not\equiv 0\pmod p$, $a^2+b^2r=4p^h$, and $ap^{(k-h)/2}\equiv -2\pmod r$.
\end{lemma}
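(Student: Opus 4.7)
The plan is to exploit the index-two hypothesis to force $G(\tau)$ into the unique quadratic subfield $K=\QQ(\sqrt{-r})$ of $\QQ(\zeta_{r^d})$, compute its ideal factorisation in $K$ via Stickelberger, and pin down the remaining unit by reducing modulo $\sqrt{-r}$. Since $r\equiv 3\pmod 4$ makes $-1$ a nonsquare modulo $r^d$ and $-p$ is a primitive root, the order of $p$ modulo $r^d$ is exactly $k=\phi(r^d)/2$, so the Frobenius $\sigma_p$ generates an index-two subgroup of $\Gal(\QQ(\zeta_{r^d})/\QQ)$ whose fixed field is the unique quadratic subfield of $\QQ(\zeta_{r^d})$, namely $K=\QQ(\sqrt{-r})$ (again because $r\equiv 3\pmod 4$). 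To locate $G(\tau)$ in $K$: the order of $p$ modulo $r$ equals $(r-1)/2>1$, hence $\gcd(r^d,p-1)=1$ and $\tau|_{\FF_p^*}$ is trivial, so regrouping the sum over $\FF_p^*$-cosets places $G(\tau)\in\QQ(\zeta_{r^d})$. Moreover, for every $j$,
\[
\sigma_{p^j}(G(\tau))=G(\tau^{p^j})=\sum_{x\in\FF_{p^k}^*}\tau(x^{p^j})\psi(x)=\sum_{y\in\FF_{p^k}^*}\tau(y)\psi(y)=G(\tau),
\]
where the penultimate equality uses the substitution $y=x^{p^j}$ combined with the Frobenius-invariance of $\Tr_{\FF_{p^k}/\FF_p}$ (hence of $\psi$). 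Thus $G(\tau)$ is fixed by the full decomposition subgroup $\langle\sigma_p\rangle$ and lies in $K$.

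Since $\sigma_p$ acts trivially on $K$, the prime $p$ splits as $p\mathcal{O}_K=\mathfrak{p}\bar{\mathfrak{p}}$, and the identity $|G(\tau)|^2=p^k$ yields $(G(\tau))=\mathfrak{p}^s\bar{\mathfrak{p}}^t$ with $s+t=k$ and $s,t\ge 0$. The decisive step---and the main technical obstacle---is to show $|s-t|=h$, where $h$ is the class number of $K$. I would derive this from Stickelberger's theorem, which writes $(G(\tau))$ as $\mathfrak{P}^{\theta}$ in $\mathcal{O}_{\QQ(\zeta_{r^d})}$ for the Stickelberger element $\theta=\sum_{a\in(\ZZ/r^d)^*}\{a/r^d\}\sigma_a^{-1}$; collecting exponents along cosets of $\langle p\rangle$ and descending to $K$ rewrites $s-t$ as a weighted character sum over $(\ZZ/r^d)^*$, which collapses via the analytic class number formula for $\QQ(\sqrt{-r})$ to $\pm h$. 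Both $k=(r-1)r^{d-1}/2$ and $h$ are odd (the former because $(r-1)/2$ is odd when $r\equiv 3\pmod 4$, the latter by genus theory for imaginary quadratic fields of prime discriminant), so $(k\pm h)/2\in\ZZ$.

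Using principality of $\mathfrak{p}^h$, write $\mathfrak{p}^h=(\pi)$ with $\pi=(a+b\sqrt{-r})/2\in\mathcal{O}_K$, which forces $a^2+b^2r=4p^h$. Then
\[
(G(\tau))=\mathfrak{p}^{(k+h)/2}\bar{\mathfrak{p}}^{(k-h)/2}=(\pi)(\mathfrak{p}\bar{\mathfrak{p}})^{(k-h)/2}=(\pi\,p^{(k-h)/2}),
\]
so $G(\tau)=u\,\pi\,p^{(k-h)/2}$ for some unit $u\in\mathcal{O}_K^*=\{\pm 1\}$ (using $r>3$). To determine $u$, reduce modulo the ramified prime $\mathfrak{r}=(\sqrt{-r})$ above $r$: every value $\tau(x)\in\mu_{r^d}$ reduces to $1$ modulo any prime above $r$, so
\[
G(\tau)\equiv\sum_{x\in\FF_{p^k}^*}\psi(x)=-1\pmod{\mathfrak{r}},
\]
while $u\pi p^{(k-h)/2}\equiv u(a/2)p^{(k-h)/2}\pmod{\mathfrak{r}}$. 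Equating gives $ua\,p^{(k-h)/2}\equiv -2\pmod r$, and absorbing $u=\pm 1$ into the sign of $(a,b)$ yields the stated normalisation $a\,p^{(k-h)/2}\equiv -2\pmod r$. Finally, $a,b\not\equiv 0\pmod p$: if $p\mid a$ then $p\mid b^2r$, hence $p\mid b$ (since $\gcd(p,r)=1$), which (for odd $p$; the case $p=2$ requires a small parity check) would make $\pi/p\in\mathcal{O}_K$, contradicting $v_{\bar{\mathfrak{p}}}(\pi)=0$.
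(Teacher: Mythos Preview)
The paper does not prove this lemma; it is quoted verbatim from Langevin~\cite{Lan1997} (with a pointer to Yang--Xia~\cite{YanXia2010}) and used as a black box. So there is no ``paper's own proof'' to compare against.

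Your sketch is the standard index-two argument and is essentially what Langevin does: use the Galois action to force $G(\tau)$ into $K=\QQ(\sqrt{-r})$, invoke Stickelberger plus the analytic class number formula to pin down the ideal factorisation $(G(\tau))=\mathfrak{p}^{(k+h)/2}\bar{\mathfrak{p}}^{(k-h)/2}$, then determine the unit by reducing modulo $\sqrt{-r}$. Two places deserve a cleaner write-up. First, the Stickelberger step---which you flag as the main technical obstacle---is only gestured at; the actual computation showing that the coset sums of fractional parts collapse to $\pm h$ via the Dirichlet class number formula is where all the content lives, and in a self-contained proof you would need to carry this out (or at least cite Berndt--Evans--Williams or Langevin explicitly for the identity). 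Second, your argument for $p\nmid a$ is slightly garbled: ``$\pi/p\in\mathcal{O}_K$'' does not follow directly from $p\mid a$ and $p\mid b$ without checking parities. The clean way is the one you allude to at the end: since $(\pi)=\mathfrak{p}^h$ we have $v_{\mathfrak{p}}(\bar\pi)=0$, hence $a=\pi+\bar\pi\equiv\bar\pi\not\equiv 0\pmod{\mathfrak{p}}$, so $p\nmid a$; the same with $b\sqrt{-r}=\pi-\bar\pi$ gives $p\nmid b$. With those two points tightened, your argument is a faithful reconstruction of the cited result.
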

\par
Recall that for a finite field extension $K/F$, the \emph{norm function} $\N_{K/F}:K\to F$ is defined by
\[
\N_{K/F}(y)=\prod_{\sigma\in\Gal(K/F)}\sigma(y)
\]
for each $y\in K$. Every multiplicative character $\tau$ of $\FF_q$ can be lifted to a multiplicative character $\chi$ of $\FF_{q^s}$ by defining 
\[
\chi(y)=\tau(\N_{\FF_{q^s}/\FF_q}(y))
\]
for each $y\in\FF_{q^s}$. Note that, if $d$ is a divisor of $q-1$, then this lifting is an isomorphism between the character subgroups of order $d$ of $\FF_q^*$ and $\FF_{q^s}^*$.
\par
The well known Davenport-Hasse Theorem gives the relationship between the two Gauss sums $G(\tau)$ and $G(\chi)$.
\begin{lemma}[{\cite[Theorem 5.14]{LidNie1997}}]
\label{lem:DHT}
Let $\tau$ be a multiplicative character of $\FF_q$ and suppose that $\tau$ is lifted to a multiplicative character $\chi$ of $\FF_{q^s}$. Then
\[
G(\chi)=-(-G(\tau))^s.
\]
\end{lemma}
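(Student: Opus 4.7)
The plan is to prove the identity by the classical generating-function method: express both $1+G(\tau)u$ and $1+G(\chi)v$ in closed form as natural Euler-product style $L$-series, and then reconcile them by analysing how irreducibles over $\FF_q$ decompose over $\FF_{q^s}$.

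For any monic polynomial $f=T^n+c_{n-1}T^{n-1}+\dots+c_0\in\FF_q[T]$ with roots $\alpha_1,\dots,\alpha_n$ in $\overline{\FF_q}$, I attach the weight
\[
\lambda_q(f)=\tau\big((-1)^nc_0\big)\,\eta(-c_{n-1})=\tau\Big(\textstyle\prod_i\alpha_i\Big)\,\eta\Big(\textstyle\sum_i\alpha_i\Big).
\]
The second expression makes the multiplicativity $\lambda_q(fg)=\lambda_q(f)\,\lambda_q(g)$ manifest. Next I would evaluate the generating function
\[
Z_q(u)=\sum_{f\text{ monic}}\lambda_q(f)\,u^{\deg f}=\prod_{P\text{ monic irred.}}\frac{1}{1-\lambda_q(P)\,u^{\deg P}}
\]
in closed form. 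Summing the left-hand side over the $n$ free coefficients of a generic monic $f$ of degree $n\ge 2$ produces a factor $\sum_{c_{n-1}\in\FF_q}\eta(-c_{n-1})=0$, while the degree-$1$ coefficient equals $\sum_{c\in\FF_q}\tau(c)\eta(c)=G(\tau)$. Hence $Z_q(u)=1+G(\tau)\,u$, and the same construction over $\FF_{q^s}$ with the lifted pair $(\chi,\psi)$ gives $Z_{q^s}(v)=1+G(\chi)\,v$.

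The heart of the argument is then to relate the two Euler products through the substitution $v=u^s$. Fix a monic $\FF_q$-irreducible $P$ of degree $d$ with root $\alpha\in\FF_{q^d}$; then $P$ factors over $\FF_{q^s}$ into $m=\gcd(d,s)$ monic irreducibles $Q_1,\dots,Q_m$, each of degree $d'=d/m$. Using transitivity of $\N$ and $\Tr$ through the tower $\FF_{q^{ds/m}}/\FF_{q^s}/\FF_q$, together with the fact that every element of $\FF_{q^d}$ acquires the multiplier $s/m=[\FF_{q^{ds/m}}:\FF_{q^d}]$ under $\Tr_{\FF_{q^{ds/m}}/\FF_{q^d}}$ and the analogous norm identity, a short computation yields $\lambda_{q^s}(Q_j)=\lambda_q(P)^{s/m}$ uniformly in $j$. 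Combining this with the cyclotomic identity $\prod_{\xi^{s/m}=1}(1-\xi y)=1-y^{s/m}$ and averaging over $s$-th roots of unity leads to the formal-power-series identity
\[
Z_{q^s}(u^s)=\prod_{\zeta^s=1}Z_q(\zeta u).
\]
Substituting $Z_q(\zeta u)=1+G(\tau)\,\zeta\,u$ and $Z_{q^s}(u^s)=1+G(\chi)\,u^s$ and applying $\prod_{\zeta^s=1}(1-\zeta x)=1-x^s$ with $x=-G(\tau)u$ gives
\[
1+G(\chi)\,u^s=1-(-G(\tau)u)^s,
\]
from which $G(\chi)=-(-G(\tau))^s$ follows by comparing the coefficient of $u^s$. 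The main obstacle is establishing $\lambda_{q^s}(Q_j)=\lambda_q(P)^{s/m}$: this requires careful norm-trace bookkeeping through the tower of extensions and a small Galois-orbit computation, after which the rest of the proof is pure formal-series manipulation.
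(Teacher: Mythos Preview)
The paper does not prove this lemma at all; it is quoted as a black box from Lidl--Niederreiter \cite[Theorem~5.14]{LidNie1997}. Your argument is correct and is in fact the classical proof of the Davenport--Hasse relation (essentially the one appearing in the cited reference and in most textbooks): package the Gauss sum as the degree-one coefficient of a Dirichlet-style generating function $Z_q(u)$ over monic polynomials, invoke unique factorisation for an Euler product, and then match local factors under the base change $\FF_q\to\FF_{q^s}$ via the identity $\prod_{\zeta^s=1}(1-\zeta x)=1-x^s$. The computation $\lambda_{q^s}(Q_j)=\lambda_q(P)^{s/m}$ that you flag as the crux is indeed the one nontrivial step, and your tower-of-fields outline handles it correctly: for a root $\beta\in\FF_{q^d}$ of $P$ one has $\N_{\FF_{q^{ds/m}}/\FF_{q^d}}(\beta)=\beta^{s/m}$ and $\Tr_{\FF_{q^{ds/m}}/\FF_{q^d}}(\beta)=(s/m)\beta$, from which the claim is immediate after composing with $\N_{\FF_{q^d}/\FF_q}$ and $\Tr_{\FF_{q^d}/\FF_q}$. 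One small remark: your phrase ``averaging over $s$-th roots of unity'' should read ``taking the product over $s$-th roots of unity''; the rest is sound.
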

\par
Now we obtain the following lemma as a corollary to Lemma~\ref{lem:gauss_sum_index_two}.
\begin{lemma}
\label{lem:Gauss_sum_evaluations}
Let $e$ and $d$ be integers satisfying $1\le d\le e$ and let $p$ be the characteristic of $\FF_q$. Suppose that $r>3$ is another prime such that $r\equiv 3\pmod 4$ and $-p$ is a primitive root modulo $r^e$. Write $m=\phi(r^e)/2$ and $q=p^t$ and let $h$ be the class number of $\QQ(\sqrt{-r})$. Then there are nonzero integers $a$ and $b$ such that
\[
\frac{G(\chi)}{q^{m/2}}=-\left(-\frac{a\pm b\sqrt{-r}}{2p^{h/2}}\right)^{t\cdot r^{e-d}}
\]
for all multiplicative characters $\chi$ of $\FF_{q^m}$ of order~$r^d$, where the sign can depend on $\chi$.
\end{lemma}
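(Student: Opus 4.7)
The plan is to transfer the Gauss sum computation on $\FF_{q^m}$ down to the small base field $\FF_{p^{k_d}}$, where $k_d:=\phi(r^d)/2$, apply the semiprimitive evaluation of Lemma~\ref{lem:gauss_sum_index_two} there, and lift back to $\FF_{q^m}$ through the Davenport--Hasse identity of Lemma~\ref{lem:DHT}. All the ingredients are in place; the work is arranging them correctly.

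First I would verify the index computations that make the transfer go through. Because $-p$ is a primitive root modulo $r^e$ and primitivity modulo $r^e$ descends to primitivity modulo $r^d$ for every $1\le d\le e$ by the standard theory of primitive roots of odd prime powers, we have $\ord_{r^d}(-p)=\phi(r^d)$. The hypothesis $r\equiv 3\pmod 4$ forces $(r-1)/2$ to be odd; writing $p=(-p)^{\phi(r^d)/2+1}$ and checking that $\gcd(\phi(r^d),\phi(r^d)/2+1)=2$ then yields $\ord_{r^d}(p)=\phi(r^d)/2=k_d$. Hence $\FF_{p^{k_d}}$ is precisely the smallest extension of $\FF_p$ carrying a multiplicative character of order $r^d$, and it sits inside $\FF_{q^m}=\FF_{p^{tm}}$ with $[\FF_{q^m}:\FF_{p^{k_d}}]=tm/k_d=t\,r^{e-d}$.

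Next I would pull characters back. Because $\N_{\FF_{q^m}/\FF_{p^{k_d}}}$ is surjective, its pullback on characters is injective and preserves order; since both character groups contain exactly $\phi(r^d)$ elements of order $r^d$, this pullback is a bijection between them. So every $\chi$ of order $r^d$ on $\FF_{q^m}$ equals $\tau\circ \N_{\FF_{q^m}/\FF_{p^{k_d}}}$ for a unique $\tau$ of order $r^d$ on $\FF_{p^{k_d}}$. Applying Lemma~\ref{lem:gauss_sum_index_two} (its $k$ now being our $k_d$) gives $G(\tau)=\tfrac12(a+b\sqrt{-r})\,p^{(k_d-h)/2}$ with nonzero integers $a,b$, and Lemma~\ref{lem:DHT} with $s=t\,r^{e-d}$ yields $G(\chi)=-(-G(\tau))^{tr^{e-d}}$. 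Dividing through by $q^{m/2}=(p^{k_d/2})^{tr^{e-d}}$ produces the claimed expression.

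The one subtle point, and the main obstacle, is that Lemma~\ref{lem:gauss_sum_index_two} pins down $a,b$ relative to a single reference character, whereas the target asks for fixed $a,b$ valid, up to the sign of $b$, across all $\chi$ of order $r^d$. To settle this I would fix one base character $\tau_0$ of order $r^d$ with $G(\tau_0)=\tfrac12(a+b\sqrt{-r})\,p^{(k_d-h)/2}$ and observe that every other character of order $r^d$ is $\tau_0^j$ for some $j$ coprime to $r$. Since the values of $\tau_0$ lie in $\QQ(\zeta_{r^d})$ while those of the canonical additive character lie in $\QQ(\zeta_p)$, these cyclotomic fields are linearly disjoint over $\QQ$, so the Galois element that sends $\zeta_{r^d}\mapsto\zeta_{r^d}^j$ and fixes $\zeta_p$ carries $G(\tau_0)$ to $G(\tau_0^j)$; restricted to $\QQ(\sqrt{-r})\subset\QQ(\zeta_{r^d})$ it acts as identity or as complex conjugation according to the Legendre symbol $\leg{j}{r}$. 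Thus $G(\tau_0^j)=\tfrac12(a\pm b\sqrt{-r})\,p^{(k_d-h)/2}$ with sign governed by $\leg{j}{r}$, and propagating this through the $tr^{e-d}$-th power in Davenport--Hasse yields the asserted formula with $\pm$ depending on $\chi$.
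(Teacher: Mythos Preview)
Your proof takes the same route as the paper: reduce to the base field $\FF_{p^{k_d}}$, invoke Lemma~\ref{lem:gauss_sum_index_two} there, and lift back via Davenport--Hasse (Lemma~\ref{lem:DHT}) through the degree $tm/k_d=t\,r^{e-d}$. The paper dispatches your ``subtle point'' more quickly by observing that the side conditions $a^2+b^2r=4p^h$ and $ap^{(k-h)/2}\equiv -2\pmod r$ in Lemma~\ref{lem:gauss_sum_index_two} are independent of the particular character and already determine $(a,\abs{b})$; your Galois-conjugation route also works, but to complete it you should note that $\sigma_j$ fixes $p^{(k_d-h)/2}$ (either $k_d-h$ is even, or $\sqrt{p}\in\QQ(\zeta_p)$ and is fixed since $\sigma_j$ fixes $\zeta_p$).
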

\begin{proof}
Note that $-p$ is also a primitive root modulo $p^d$. Write $k=\phi(q^d)/2$ and let $\tau$ be the multiplicative character of $\FF_{p^k}$ of order $r^d$ such that $\chi$ is the lifted character of $\tau$. Lemma~\ref{lem:gauss_sum_index_two} implies that there are nonzero integers~$a$ and $b$ such that
\[
G(\tau)=\frac{1}{2}(a\pm b\sqrt{-r})p^{(k-h)/2},
\]
where the sign can depend on $\chi$. By Lemma~\ref{lem:DHT} we have
\[
\frac{G(\chi)}{q^{m/2}}=-\left(-\frac{a\pm b\sqrt{-r}}{2p^{h/2}}\right)^{tm/k}
\]
and the lemma follows since $m/k=\phi(r^e)/\phi(r^d)=r^{e-d}$.
\end{proof}
\par
The next lemma gives the desired control for the error term in Lemma~\ref{lem:gauss_sum_fourier}.
\begin{lemma}
\label{lem:Gauss_sum_error}
Let $e$ be a positive integer and let $p$ be the characteristic of~$\FF_q$. Suppose that $r>3$ is another prime such that $r\equiv 3\pmod 4$ and $-p$ is a primitive root modulo $r^e$. Write $m=\phi(r^e)/2$ and let $\e>0$. Then there is an infinite set $I$ of odd positive integers such that, for all $s\in I$ and all nontrivial multiplicative characters~$\chi$ of $\FF_{q^{sm}}$ of order dividing $r^e$, we have
\[
\abs{\arg (-G(\chi))}\le \e.
\]
Here, $\arg(\xi)\in(-\pi,\pi]$ is the principal angle of a nonzero complex number~$\xi$.
\end{lemma}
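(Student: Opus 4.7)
The approach is to reduce the claim, via the Davenport-Hasse theorem and the explicit Gauss-sum evaluation in Lemma~\ref{lem:Gauss_sum_evaluations}, to a simultaneous Diophantine approximation problem, which I solve using Kronecker-style irrationality and Weyl's equidistribution theorem.

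By Lemma~\ref{lem:DHT}, every nontrivial character $\chi$ of $\FF_{q^{sm}}$ of order $r^d$ (with $1\le d\le e$) is the norm-lift of a unique character $\chi'$ of $\FF_{q^m}$ of the same order, and $-G(\chi) = (-G(\chi'))^s$. Lemma~\ref{lem:Gauss_sum_evaluations} gives $-G(\chi')/q^{m/2} = \alpha_d^{\pm tr^{e-d}}$, where $\alpha_d = -(a_d+b_d\sqrt{-r})/(2p^{h/2})$ has unit modulus. Setting $\theta_d = \arg\alpha_d$, one obtains $\arg(-G(\chi)) \equiv \pm str^{e-d}\theta_d \pmod{2\pi}$, so the lemma reduces to finding infinitely many odd positive integers $s$ with $|str^{e-d}\theta_d \bmod 2\pi|\le \e$ for every $d\in\{1,\ldots,e\}$.

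I would next show that each $\theta_d/(2\pi)$ is irrational. Write $\pi_d = (a_d+b_d\sqrt{-r})/2\in\QQ(\sqrt{-r})$, so $\pi_d\bar\pi_d = p^h$ and $\alpha_d = -\pi_d/p^{h/2}$. If $\alpha_d^N = 1$ for some positive integer $N$, then $(-1)^N\pi_d^N = p^{hN/2}$. Because $-p$ is a primitive root modulo $r$ we have $p\ne r$, so $\sqrt{p}\notin\QQ(\sqrt{-r})$; this forces $hN$ to be even, whence $\pi_d^N \in \QQ$ and $(\pi_d/\bar\pi_d)^N = 1$. Thus $\pi_d/\bar\pi_d$ is a root of unity in $\QQ(\sqrt{-r})$; since $r>3$, the only such roots of unity are $\pm 1$, forcing $a_d=0$ or $b_d=0$ and contradicting Lemma~\ref{lem:gauss_sum_index_two}.

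Finally, the congruence $a_d p^{(k_d-h)/2}\equiv -2\pmod r$ from Lemma~\ref{lem:gauss_sum_index_two}, together with $p^{(k_{d+1}-k_d)/2}\equiv 1\pmod r$ (which follows since $p$ is a quadratic residue modulo $r$, as $-p$ has order $\phi(r^e)$ and $r\equiv 3\pmod 4$), forces all $a_d$ to share one residue class modulo $r$; combined with the finiteness of integer solutions of $a_d^2+b_d^2 r = 4p^h$, the angles $\theta_d$ all arise from a single irrational angle $\Theta$ via $\theta_d\equiv \pm\Theta + \delta_d\pi\pmod{2\pi}$ with $\delta_d\in\{0,1\}$. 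Applying Weyl's equidistribution theorem to the sequence of $st\Theta/(2\pi)\bmod 1$ over odd positive $s$---which is equidistributed since $2t\Theta/(2\pi)$ is irrational---I select odd $s$ with $|st\Theta\bmod 2\pi|\le \e/r^{e-1}$, so that $|str^{e-d}\Theta\bmod 2\pi|\le\e$ for every $d$. The residual $\delta_d\pi$ phases contribute $str^{e-d}\delta_d\pi\bmod 2\pi$, which vanishes when $t$ is even (as $s,r$ are odd) and is a fixed element of $\{0,\pi\}$ when $t$ is odd; in that case one further restricts $s$ to an arithmetic progression on which these phases vanish uniformly. The main obstacle is the irrationality step, which crucially exploits $r>3$; the parity analysis in the final step is a finite case check.
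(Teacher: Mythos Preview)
Your approach is essentially the same as the paper's: reduce via Davenport--Hasse and Lemma~\ref{lem:Gauss_sum_evaluations} to a single irrational angle, establish irrationality from the fact that the only roots of unity in $\QQ(\sqrt{-r})$ are $\pm 1$ (using $r>3$), and apply Weyl equidistribution along odd $s$ to make the argument small. The paper phrases this slightly differently---it fixes one character $\tau$ of order $r^e$, controls $\arg(-G(\tau)^s)$ to within $\e/r^{e-1}$, and then invokes Lemma~\ref{lem:Gauss_sum_evaluations} directly to bound $\lvert\arg(-G(\chi))\rvert\le r^{e-d}\lvert\arg(-G(\tau'))\rvert$ for characters $\chi$ of lower order---but the substance is identical.

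One loose end in your write-up: the $\delta_d\pi$ phases are spurious, and your proposed fix for them does not work. Your own residue argument actually shows $\delta_d=0$. Indeed, the condition $p\nmid a_d,b_d$ in Lemma~\ref{lem:gauss_sum_index_two} forces $(a_d+b_d\sqrt{-r})/2$ to generate either $\mathfrak p^h$ or $\bar{\mathfrak p}^h$, so $\lvert a_d\rvert$ is independent of $d$; combined with $a_d\equiv a_e\pmod r$ and $a_d\not\equiv 0\pmod r$, this gives $a_d=a_e$ (not merely $\pm a_e$), hence $\theta_d\in\{\Theta,-\Theta\}$ with no $\pi$-shift. This matters because your fallback---restricting $s$ to an arithmetic progression---cannot kill a phase of $\pi$: when $t$ is odd, $str^{e-d}\pi\equiv\pi\pmod{2\pi}$ for \emph{every} odd $s$, so no choice of $s$ helps. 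Once you note $\delta_d=0$, the final paragraph collapses to exactly the paper's argument.
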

\begin{proof}
Let $\tau$ be a multiplicative character of $\FF_{q^m}$ of order $r^e$. Since $r>3$, the units in the ring of algebraic integers of $\QQ(\sqrt{-r})$ are $\pm 1$, so that $\pm 1$ are the only roots of unity in $\QQ(\sqrt{-r})$. It then follows from Lemma~\ref{lem:Gauss_sum_evaluations} that $G(\tau)/q^{m/2}$ is not a root of unity. Therefore Weyl's uniform distribution theorem~\cite[Satz~2]{Wey1916} implies that $([G(\tau)/q^{m/2}]^{2i})_{i\in\NN}$, and therefore also $(G(\tau)/q^{m/2}]^{2i+1})_{i\in\NN},$ is uniformly distributed on the complex unit circle. Hence there is an infinite set $I$ of odd positive integers such that
\[
\abs{\arg(-G(\tau)^s)}\le \frac{\e}{r^{e-1}}
\]
for all $s\in I$.
\par
Let $s\in I$ and lift $\tau$ to a multiplicative character $\tau'$ to $\FF_{q^{sm}}$. Then $\tau'$ has order~$r^e$ and Lemma~\ref{lem:DHT} implies $G(\tau')=G(\tau)^s$, so that
\[
\abs{\arg (-G(\tau'))}\le \frac{\e}{r^{e-1}}.
\]
Now let $\chi$ be a multiplicative character of $\FF_{q^{sm}}$ of order $r^d$, where $1\le d\le e$. Then by Lemma~\ref{lem:Gauss_sum_evaluations} we have
\[
\abs{\arg (-G(\chi))}\le r^{e-d}\,\abs{\arg (-G(\tau'))},
\]
which completes the proof.
\end{proof}
\par
We are now in a position to deduce the following result, which controls~$\widehat f_T$ and gives our first desired ingredient for the proof of Proposition~\ref{pro:construction}.
\begin{proposition}
\label{pro:bound_f_T}
Let $e$ be a positive integer and let $p$ be the characteristic of $\FF_q$. Suppose that $r>3$ is another prime such that $r\equiv 3\pmod 4$ and~$-p$ is a primitive root modulo $r^e$. Put $v=r^e$ and let $\e>0$. Then there are infinitely many odd multiples $n$ of $\ord_v(p)$ such that the function $f_T$ satisfies
\[
\sum_{\lambda\in\FF_q^*}\overline{\eta(\lambda b)}\;\widehat f_T(\lambda a,\lambda)\le 1+\e vq
\]
for all $a\in\FF_{q^n}$ and all $b\in\FF_q$.
\end{proposition}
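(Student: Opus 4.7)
The plan is to chain together Lemma~\ref{lem:gauss_sum_fourier} and Lemma~\ref{lem:Gauss_sum_error}, with the remark after Proposition~\ref{pro:construction} taking care of the parity condition on $n$. First, the remark gives that $m:=\phi(r^e)/2 = \ord_v(p) = \tfrac{1}{2}(r-1)r^{e-1}$, and since $r\equiv 3\pmod 4$ the factor $(r-1)/2$ is odd, as is $r^{e-1}$, so $m$ itself is odd. Thus any odd multiple $sm$ of $m$ is simultaneously odd and a multiple of $\ord_v(p)$.

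Next, apply Lemma~\ref{lem:Gauss_sum_error} with the given $\e>0$ to produce an infinite set $I$ of odd positive integers such that, for every $s\in I$ and every nontrivial multiplicative character $\chi$ of $\FF_{q^{sm}}$ of order dividing $r^e=v$, we have $\abs{\arg(-G(\chi))}\le\e$. Set $n=sm$; the discussion above shows that these are infinitely many odd multiples of $\ord_v(p)$.

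The one small bridge to cross is the conversion from an angular bound on $-G(\chi)$ to the modulus bound required by Lemma~\ref{lem:gauss_sum_fourier}. Since $\chi$ is nontrivial we have $\abs{G(\chi)}=q^{n/2}$, so $-G(\chi)/q^{n/2}$ lies on the unit circle; combining this with the elementary inequality $2\sin(\theta/2)\le\theta$ for $\theta\ge 0$ yields
\[
\biggabs{\frac{G(\chi)}{q^{n/2}}+1}=\biggabs{\frac{-G(\chi)}{q^{n/2}}-1}=2\sin\!\left(\frac{\abs{\arg(-G(\chi))}}{2}\right)\le\e.
\]
Thus the hypothesis of Lemma~\ref{lem:gauss_sum_fourier} holds with the same $\e$ for every nontrivial multiplicative character of $\FF_{q^n}$ of order dividing $v$, and its conclusion delivers the required bound on $\sum_{\lambda\in\FF_q^*}\overline{\eta(\lambda b)}\,\widehat f_T(\lambda a,\lambda)$ for all $a\in\FF_{q^n}$ and $b\in\FF_q$.

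The proposition is therefore a bookkeeping assembly of results already in place; the genuinely hard work — the explicit semiprimitive Gauss sum evaluation from Lemma~\ref{lem:gauss_sum_index_two}, the Davenport--Hasse lift in Lemma~\ref{lem:DHT}, and the equidistribution argument via Weyl's theorem used to guarantee that $G(\tau)/q^{m/2}$ is not a root of unity — has been absorbed into Lemma~\ref{lem:Gauss_sum_error}. The only potential trap, and thus the point requiring care in the write-up, is ensuring that $n$ can be chosen \emph{odd}: this relies simultaneously on the odd integers produced by Lemma~\ref{lem:Gauss_sum_error} and on the parity observation $2\mid\!\!\!/\,m$, which in turn uses the congruence $r\equiv 3\pmod 4$ that is hypothesised throughout.
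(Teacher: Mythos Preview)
Your proof is correct and follows essentially the same route as the paper: invoke Lemma~\ref{lem:Gauss_sum_error} to get an infinite set of odd $s$, set $n=sm$ with $m=\ord_v(p)$, and feed the resulting bound into Lemma~\ref{lem:gauss_sum_fourier}. The only difference is that you make explicit two steps the paper leaves to the reader, namely the conversion from $\abs{\arg(-G(\chi))}\le\e$ to $\bigabs{G(\chi)/q^{n/2}+1}\le\e$ via $\abs{e^{i\theta}-1}=2\sin(\abs{\theta}/2)\le\abs{\theta}$, and the verification that $m$ is odd so that $n=sm$ is odd.
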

\begin{proof}
Write $m=\phi(v)/2$ and note that $m=\ord_v(p)$. Letting $\e>0$, Lemma~\ref{lem:Gauss_sum_error} implies that there is an infinite set $I$ of odd positive integers such that
\[
\biggabs{\frac{G(\chi)}{q^{sm/2}}+1}\le\e
\]
for all $s\in I$ and all nontrivial multiplicative characters $\chi$ of $\FF_{q^{sm}}$ of order dividing $v$. The desired result then follows from Lemma~\ref{lem:gauss_sum_fourier}.
\end{proof}
\par
We remark that in Proposition~\ref{pro:bound_f_T} the conclusion holds for infinitely many~$n$, which is stronger than what is needed to prove Proposition~\ref{pro:construction}.


\section{The function $f_S$}
\label{sec:Discrepancy}

This section concerns the existence of an appropriate function $f_S:S\to\FF_q$. We shall use the following result that might be also of independent interest in discrepancy theory.
\begin{theorem}
\label{thm:set_partition}
Let $K\ge 2$ be an integer and let $\Fc$ be a family of $M$ subsets of a finite set $X$ with $\abs{X}=N$ and $M\ge N$. Then, for all sufficiently large~$N$, there exists a partition $\{Z_1,Z_2,\dots,Z_K\}$~of~$X$~such~that
\begin{equation}
\max_{Y\in\Fc}\biggabs{\abs{Y\cap Z_i}-\frac{\abs{Y}}{K}}\le 308\sqrt{\frac{N}{K}\log\frac{2KM}{N}}   \label{eqn:subsets_bound}
\end{equation}
for each $i\in\{1,2,\dots,K\}$.
\end{theorem}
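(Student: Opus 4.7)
The plan is to reduce the $K$-partition discrepancy problem to iterated balanced bisection, applying Spencer's ``six standard deviations'' theorem at each level. Spencer's theorem guarantees that for any family $\Fc'$ of $M'$ subsets of a ground set of size $n$ with $M' \ge n$, there is a bipartition into two nearly equal parts such that every $Y \in \Fc'$ meets each part within $C_0 \sqrt{n \log(2M'/n)}$ of $|Y|/2$, for some absolute constant~$C_0$.

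I would first assume $K = 2^L$ is a power of two (otherwise pass to $K' = 2^{\lceil \log_2 K\rceil} < 2K$, run the argument for $K'$, and merge the extra pieces at the end, losing only a bounded factor). Starting from $X$ as a single piece at level~$0$, recursively bisect: at level $j+1$, apply Spencer's bipartition separately to each of the $2^j$ pieces of size $\approx N/2^j$ obtained at level~$j$, using the restriction of $\Fc$ to each piece as the family. Since $M \ge N \ge N/2^j$ at every level, the hypothesis $M' \ge n$ of Spencer's theorem is maintained. After $L$ rounds I obtain the partition $\{Z_1,\dots,Z_K\}$ with pieces of size $\approx N/K$.

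To control the discrepancy, for each piece $P$ at level~$j$ and each $Y \in \Fc$ write $|Y \cap P| = |Y|/2^j + e_j(P,Y)$. Bisecting $P$ into $P_0, P_1$ at level $j+1$ yields $e_{j+1}(P_b, Y) = \tfrac12 e_j(P,Y) + \varepsilon_{j+1,b}$ with $|\varepsilon_{j+1,b}| \le C_0 \sqrt{(N/2^j)\log(2M \cdot 2^j/N)}$ by Spencer. Unrolling from the trivial $e_0 = 0$ gives
\[
|e_L(Z_i,Y)| \le C_0 \sum_{j=1}^L 2^{-(L-j)} \sqrt{(N/2^{j-1}) \log(2M \cdot 2^{j-1}/N)}.
\]
The $j$th summand simplifies to $C_0 (\sqrt{2N}/K)\cdot 2^{j/2} \sqrt{\log(2M \cdot 2^{j-1}/N)}$, so the sum is geometric in $2^{j/2}$ and is dominated (up to a factor $1/(1 - 1/\sqrt{2}) = 2+\sqrt{2}$) by its last term $j = L$, which equals $C_0 \sqrt{2N/K}\,\sqrt{\log(MK/N)}$. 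This yields a bound of the claimed form $O\bigl(\sqrt{(N/K)\log(2MK/N)}\bigr)$.

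The main obstacle is extracting the specific constant $308$. This requires a version of Spencer's theorem with an explicit workable value of $C_0$ (the partial-coloring/entropy method gives a concrete constant, as do constructive variants of Bansal and Lovett--Meka), careful bookkeeping of the geometric series including the slowly-varying logarithmic factor, and absorbing into the constant the rounding slack (non-integer piece sizes, non-integer $|Y|/K$, and the power-of-two rounding of $K$). The hypothesis that $N$ is sufficiently large is precisely what permits these lower-order additive errors to be swallowed by the leading $\sqrt{(N/K)\log(2KM/N)}$ term.
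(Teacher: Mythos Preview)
Your recursive bisection and geometric-series error analysis for $K=2^L$ are correct and match the paper's scheme. The genuine gap is your reduction for general $K$: rounding up to $K'=2^{\lceil\log_2 K\rceil}$ and then ``merging the extra pieces'' does \emph{not} lose only a bounded factor. After the $K'$-partition each piece $Z'_j$ satisfies $|Y\cap Z'_j|=|Y|/K'+O\bigl(\sqrt{(N/K')\log(2K'M/N)}\bigr)$, so a singleton block $Z_i=Z'_j$ has $\bigl||Y\cap Z_i|-|Y|/K\bigr|\approx |Y|\,(1/K-1/K')$, while a merged pair has $\bigl||Y\cap Z_i|-|Y|/K\bigr|\approx |Y|\,(2/K'-1/K)$. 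For $K<K'<2K$ at least one of these deviations is of order $|Y|/K$, hence of order $N/K$ when $|Y|$ is large, which swamps the target $\sqrt{(N/K)\log(2KM/N)}$. (Take $K=3$, $K'=4$: the singletons miss $|Y|/3$ by $|Y|/12$.) No amount of ``sufficiently large $N$'' absorbs this; it is a main term, not a rounding error.

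The paper avoids the power-of-two restriction by never bisecting exactly in half. It first proves a single-subset lemma: for \emph{any} $\theta\in[0,1]$ there exists $Z\subset X$ with $\bigl||Y\cap Z|-\theta|Y|\bigr|\le 23\sqrt{N\log(2M/N)}$ for all $Y\in\Fc$. This is obtained from Spencer's $\pm 1$ result by Beck's triangle-rounding trick (iterated midpoint rounding inside a triangle with vertices $1,e^{\pm 2\pi i\alpha}$ where $\cos\alpha=\theta/(\theta-1)$). Then, to build the $K$-partition, one recursively splits a current piece destined to produce $k$ final parts into two pieces with target ratios $\lfloor k/2\rfloor/k$ and $\lceil k/2\rceil/k$, so at every node $\theta\in[1/3,2/3]$; the same geometric-series summation you wrote then goes through uniformly in $K$. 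Your approach can be repaired exactly by inserting this $\theta$-splitting lemma in place of the plain bisection.
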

\par
The constant in Theorem~\ref{thm:set_partition} can certainly be improved by a more careful analysis. We note that Doerr and Srivastav~\cite[Theorem~3.15]{DoeSri2003} proved a result similar to Theorem~\ref{thm:set_partition}. However, compared to the proof of~\cite[Theorem~3.15]{DoeSri2003}, our proof of Theorem~\ref{thm:set_partition} is completely different and considerably simpler, although both proofs are based on Lemma~\ref{lem:lin_forms} below.
\par
Before we prove Theorem~\ref{thm:set_partition}, we deduce the following result for the existence of an appropriate function~$f_S$, which gives our second desired ingredient for the proof of Proposition~\ref{pro:construction}. Recall that $S$ is a subset of $\FF_{q^n}$ such that $S\setminus\{0\}$ contains at least~$1$ and at most $q^2-q-1$ cosets of a subgroup of~$\FF_{q^n}^*$ of index $v$. Therefore
\begin{equation}
q^n/v\le\abs{S}\le q^{n+2}/v.   \label{eqn:size_S}
\end{equation}
\begin{proposition}
\label{pro:bound_f_S}
For fixed $v$ and all sufficiently large $n$, there is a function $f_S:S\to\FF_q$ such that
\[
\bigabs{\widehat f_S(a,\lambda)}\le 308\,q^{5/2}\,\sqrt{\frac{\log(2q^2v)}{v}}
\]
for all $a\in\FF_{q^n}$ and all $\lambda\in\FF_q^*$.
\end{proposition}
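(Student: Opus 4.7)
The plan is to apply Theorem~\ref{thm:set_partition} with $K=q$ to partition $S$ into the level sets of $f_S$, choosing the witness family $\Fc$ so that the character sums defining $\widehat f_S$ are controlled by the resulting discrepancy bound.

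The key observation is that $\overline{\psi(ay)}=\eta(-\Tr_{\FF_{q^n}/\FF_q}(ay))$ depends on $y$ only through the value $\Tr_{\FF_{q^n}/\FF_q}(ay)\in\FF_q$. I would therefore take
\[
\Fc=\{Y_{a,c}:a\in\FF_{q^n},\,c\in\FF_q\},\qquad Y_{a,c}=\{y\in S:\Tr_{\FF_{q^n}/\FF_q}(ay)=c\},
\]
a family of $M\le q^{n+1}$ subsets of $X:=S$. Since $v$ is fixed and $\abs{S}\ge q^n/v$ by~\eqref{eqn:size_S}, for large $n$ the size $N:=\abs{S}$ tends to infinity and Theorem~\ref{thm:set_partition} applies (adjoining singletons to $\Fc$ if needed to force $M\ge N$, which will not affect the final estimate). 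Enumerating $\FF_q=\{\mu_1,\dots,\mu_q\}$, the theorem produces a partition $\{Z_1,\dots,Z_q\}$ of~$S$ satisfying~\eqref{eqn:subsets_bound}, and I define $f_S(y)=\mu_j$ whenever $y\in Z_j$.

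The main computation is then straightforward. For $\lambda\in\FF_q^*$ we have $\sum_{j=1}^q\eta(\lambda\mu_j)=0$, so
\[
\widehat f_S(a,\lambda)=\frac{1}{q^{n/2}}\sum_{j=1}^q\eta(\lambda\mu_j)\biggl(\sum_{y\in Z_j}\overline{\psi(ay)}-\frac{1}{q}\sum_{y\in S}\overline{\psi(ay)}\biggr),
\]
and decomposing each inner difference by the level sets of the trace yields
\[
\sum_{y\in Z_j}\overline{\psi(ay)}-\frac{1}{q}\sum_{y\in S}\overline{\psi(ay)}=\sum_{c\in\FF_q}\eta(-c)\biggl(\bigabs{Y_{a,c}\cap Z_j}-\frac{\abs{Y_{a,c}}}{q}\biggr).
\]
By Theorem~\ref{thm:set_partition} each parenthesised difference has absolute value at most $308\sqrt{(N/q)\log(2qM/N)}$. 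Summing trivially over $c\in\FF_q$ and $j\in\{1,\dots,q\}$, then substituting $N\le q^{n+2}/v$ and $2qM/N\le 2q^2v$ (both immediate from~\eqref{eqn:size_S} and the cardinality of $\Fc$), gives
\[
\bigabs{\widehat f_S(a,\lambda)}\le\frac{308\,q^2}{q^{n/2}}\sqrt{\frac{N}{q}\log\frac{2qM}{N}}\le 308\,q^{5/2}\sqrt{\frac{\log(2q^2v)}{v}},
\]
which is the claimed bound.

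The principal conceptual step is the choice of $\Fc$ in the second paragraph, by which the Fourier bound is reduced to a set-balancing statement; once this translation is made, Theorem~\ref{thm:set_partition} does all the work. The only minor technicality is guaranteeing $M\ge N$ uniformly in $v$ and $q$, handled by the artificial enlargement of $\Fc$ noted above; this is harmless because the resulting logarithm is in any case dominated by $\log(2q^2v)$.
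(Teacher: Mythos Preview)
Your proof is correct and follows essentially the same approach as the paper: the same witness family $\Fc=\{Y_{a,c}\}$ of trace level sets, the same application of Theorem~\ref{thm:set_partition} with $K=q$ to define $f_S$ via the resulting partition, and the same triangle-inequality bookkeeping over the two indices (cosets $Z_j$ and trace values $c$) to reach the bound. The only cosmetic difference is the order in which the two summations are bounded, and you are slightly more explicit than the paper about the hypothesis $M\ge N$.
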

\begin{proof}
For each $a\in\FF_{q^n}$ and each $z\in\FF_q$, define
\begin{equation}
Y_{a,z}=\{y\in S:\Tr_{\FF_{q^n}/\FF_q}(ay)=z\}.   \label{eqn:def_Yaz}
\end{equation}
From Theorem~\ref{thm:set_partition} we find that, for all sufficiently large $\abs{S}$, there exists a partition $\{Z_1,Z_2,\dots,Z_q\}$ of $S$ such that
\begin{equation}
\biggabs{\abs{Y_{a,z}\cap Z_k}-\frac{\abs{Y_{a,z}}}{q}}\le 308\sqrt{\frac{\abs{S}}{q}\log\frac{2q^{n+2}}{\abs{S}}}   \label{eqn:estimate_Y_ij_Z_k}
\end{equation}
for all $a,z,k$. Henceforth suppose that $\abs{S}$ is large enough so that this last estimate holds. For $\FF_q=\{z_1,z_2,\dots,z_q\}$, define $f_S:S\to\FF_q$ by $f_S(y)=z_k$ for $y\in Z_k$. Let $\eta$ be the canonical additive character of $\FF_q$ and let $\lambda\in\FF_q^*$. From~\eqref{eqn:estimate_Y_ij_Z_k} we find that
\[
\Biggabs{\sum_{y\in Y_{a,z}}\eta(\lambda f_S(y))-\frac{\abs{Y_{a,z}}}{q}\sum_{c\in\FF_q}\eta(\lambda c)}\le 308\,\sqrt{\abs{S}q\log(2q^{n+2}/\abs{S})} 
\]
for all $a,z$. Since $\sum_{c\in\FF_q}\eta(\lambda c)=0$, we obtain
\begin{equation}
\Biggabs{\sum_{y\in Y_{a,z}}\eta(\lambda f_S(y))}\le 308\,\sqrt{\abs{S}q\log(2q^{n+2}/\abs{S})}   \label{eqn:estimate_eta_fS} 
\end{equation}
for all $a,z$. We have
\begin{align*}
\widehat f_S(a,\lambda)&=\frac{1}{q^{n/2}}\sum_{y\in S}\eta(\lambda f_S(y))\overline{\psi(ay)}\\
&=\frac{1}{q^{n/2}}\sum_{z\in\FF_q}\overline{\eta(z)}\sum_{y\in Y_{a,z}}\eta(\lambda f_S(y)),
\end{align*}
using~\eqref{eqn:def_psi} and~\eqref{eqn:def_Yaz}. Therefore by the triangle inequality and~\eqref{eqn:estimate_eta_fS} we obtain
\[
\bigabs{\widehat f_S(a,\lambda)}\le \frac{308}{q^{n/2}}\,\sqrt{\abs{S}q^3\log(2q^{n+2}/\abs{S})},
\]
and using~\eqref{eqn:size_S}, we can obtain the required estimate.
\end{proof}
\par
In the remainder of this section we prove Theorem~\ref{thm:set_partition}. We need a classical result from discrepancy theory due to
Spencer~\cite{Spe1985}, which we quote in the following specialised form.
\begin{lemma}[{\cite[Theorem 7]{Spe1985}}]
\label{lem:lin_forms}
Let $\Fc$ be a family of $M$ subsets of a finite set~$X$ with $\abs{X}=N$ and $M\ge N$ and let $\delta$ be a real number. Then, for all sufficiently large $N$, there exists $h:X\to\{-\delta,\delta\}$ such that
\[
\max_{Y\in\Fc}\Biggabs{\sum_{y\in Y}h(y)}\le11\,\delta\sqrt{N\log(2M/N)}.
\]
\end{lemma}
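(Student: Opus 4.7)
Since $h=\delta\chi$ for a sign vector $\chi\colon X\to\{-1,+1\}$, it suffices to establish the case $\delta=1$; the general case follows by scaling. I would follow Spencer's classical partial-coloring method, which reduces the statement to a one-shot partial-coloring lemma and then iterates to a full coloring.

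\emph{Partial coloring lemma.} The key intermediate claim is: for any $\Fc$ of $M\ge N$ subsets of an $N$-point set, there exists $\chi\colon X\to\{-1,0,+1\}$ with at least $N/2$ nonzero values and
\[
\max_{Y\in\Fc}\biggabs{\sum_{y\in Y}\chi(y)}\le c_0\sqrt{N\log(2M/N)}
\]
for an absolute constant $c_0$. To prove it, sample $\chi\in\{-1,+1\}^X$ uniformly and, for each $Y_j\in\Fc$, consider $S_j(\chi)=\sum_{y\in Y_j}\chi(y)$. Hoeffding gives $P(\abs{S_j}>\lambda\sqrt{N})\le 2e^{-\lambda^2/2}$. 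Partition $\RR$ into bins that are roughly of width $\Theta(\sqrt{N\log(2M/N)})$ near $0$ and grow geometrically in the tail, and assign to each $\chi$ its signature $(b_1(\chi),\dots,b_M(\chi))$ of bin labels. A careful entropy calculation, using the sub-Gaussian tail to upper bound $\log(1/P(b_j))$ for each bin, shows that the total number of signatures attained by $\ge 2^{N/2}$ colorings is itself at most $2^{N/2}$. By pigeonhole, some signature is realized by two colorings $\chi_1\ne\chi_2$ differing on at least $N/2$ coordinates, and $(\chi_1-\chi_2)/2$ is the desired partial coloring.

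\emph{Iteration.} Apply the lemma once to get a partial coloring $\chi^{(1)}$; at most $N/2$ points remain with $\chi^{(1)}=0$. Apply the lemma again to the subsystem obtained by intersecting each $Y\in\Fc$ with the still-uncolored points, and continue. In round $k$ the uncolored ground set has size $N_k\le N/2^k$ while $M$ is unchanged, so the contribution from round $k$ is at most $c_0\sqrt{(N/2^k)\log(2M\cdot 2^k/N)}$. This forms a convergent series with $\sum_{k\ge 0}2^{-k/2}\sqrt{\log(2M/N)+k\log 2}$, which is bounded by an absolute constant times $\sqrt{N\log(2M/N)}$ once $N$ (and hence $\log(2M/N)$) is large enough. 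Summing these partial colorings produces a full $\pm 1$ coloring with the asserted bound; with optimized constants the prefactor can be pushed below $11$ for sufficiently large $N$.

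\emph{Main obstacle.} The delicate step is the entropy bound in the partial coloring lemma: the binning must be chosen so that $\sum_j\log(1/P(b_j))$ summed over a typical signature is at most $N/2$, while keeping the bin widths on the scale $\sqrt{N\log(2M/N)}$. The hypothesis $M\ge N$ enters crucially here, guaranteeing $\log(2M/N)\ge\log 2>0$ so that the entropy inequality does not degenerate. Tracking constants through both the entropy computation and the geometric sum to obtain the explicit factor $11$ is the other point requiring care, but is a routine optimization once the structural argument is in place.
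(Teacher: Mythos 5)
The paper does not give its own proof of this lemma: it is quoted verbatim as Theorem~7 of Spencer's 1985 paper, so there is no in-paper argument to compare against. Your sketch is essentially Spencer's original proof from that reference --- reduction to $\delta=1$, a partial coloring lemma established via the entropy-plus-pigeonhole method, then iteration --- and the overall structure is sound.

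One quantitative point worth flagging: the two occurrences of $N/2$ in your partial coloring lemma cannot both stand as written. If the signature has entropy at most $\alpha N$, so that some signature class contains at least $2^{(1-\alpha)N}$ colorings, then by Kleitman's diameter theorem a subset of $\{-1,+1\}^N$ of size exceeding $\sum_{i\le cN/2}\binom{N}{i}\approx 2^{H(c/2)N}$ must contain two vectors at Hamming distance at least $cN$, so the argument requires $1-\alpha > H(c/2)$. Taking $\alpha=c=1/2$ gives $1/2 > H(1/4)\approx 0.81$, which is false. Spencer's own argument uses, for instance, entropy at most $N/5$ and extracts a pair at distance at least $N/3$; the iteration then runs with $N_k\le N(2/3)^k$ rather than $N/2^k$, and the geometric series still converges. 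This is purely constant-chasing (which you already flag as the delicate step), not a structural gap. A second, minor omission: once the uncolored ground set shrinks below the ``sufficiently large'' threshold needed for the partial coloring lemma, the remaining points must be colored arbitrarily and the resulting $O(1)$ contribution absorbed, which is harmless for large $N$ but should be stated explicitly since the target is a full $\{-\delta,\delta\}$-valued $h$.
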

\par
We shall deduce the following result from Lemma~\ref{lem:lin_forms} using an idea of Beck~\cite{Bec1991}.
\begin{lemma}
\label{lem:single_subset}
Let $\Fc$ be a family of $M$ subsets of a finite set $X$ with $\abs{X}=N$ and $M\ge N$ and let $\theta\in[0,1]$. Then, for all sufficiently large $N$, there exists a subset $Z$ of $X$ such that
\[
\max_{Y\in\Fc}\bigabs{\abs{Y\cap Z}-\theta\,\abs{Y}}\le 23\sqrt{N\log(2M/N)}.
\]
\end{lemma}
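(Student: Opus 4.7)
The plan is to build $Z$ by iterated halving, in the spirit of Beck's trick. Write $\theta=\sum_{i\ge 1}b_i\,2^{-i}$ with $b_i\in\{0,1\}$, and put $A_0=X$. At each step $i=0,1,\dots,L-1$ with $L=\lceil\log_2 N\rceil$, I would apply Lemma~\ref{lem:lin_forms} with $\delta=1$ to the ground set $A_i$ and the restricted family $\Fc|_{A_i}:=\{Y\cap A_i:Y\in\Fc\}$ to obtain a signing $h_i\colon A_i\to\{-1,1\}$ with
\[
D_i:=\max_{Y\in\Fc}\biggabs{\sum_{y\in Y\cap A_i}h_i(y)}\le 11\sqrt{\abs{A_i}\log(2M/\abs{A_i})}.
\]
Writing $A_i^{\pm}=h_i^{-1}(\pm 1)$, I would include $A_i^+$ in $Z$ if and only if $b_{i+1}=1$, and continue the iteration with the undecided half $A_{i+1}=A_i^-$. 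Adjoining $A_i$ itself to the family at each step ensures $\abs{A_{i+1}}=\abs{A_i}/2+O(\sqrt{\abs{A_i}\log(2M/\abs{A_i})})$, so that $\abs{A_i}\le N/2^i$ up to lower-order terms.

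In the idealised situation where each $D_i=0$, an easy induction gives $\abs{Y\cap A_i}=\abs{Y}/2^i$ and $\abs{Y\cap A_i^+}=\abs{Y}/2^{i+1}$, so $\abs{Y\cap Z}=\sum_{i\ge 0}b_{i+1}\abs{Y\cap A_i^+}=\theta\abs{Y}$ after summing the binary expansion (the truncation from stopping at step $L$ costs at most $N/2^L=O(1)$). In the actual construction, setting $e_j(Y):=\abs{Y\cap A_j^+}-\abs{Y\cap A_j^-}$ so that $\abs{e_j(Y)}\le D_j$, the identity $\abs{Y\cap A_j^+}=(\abs{Y\cap A_j}+e_j(Y))/2$ together with $\abs{Y\cap A_{j+1}}=(\abs{Y\cap A_j}-e_j(Y))/2$ lets one expand $\abs{Y\cap A_i^+}$ explicitly as a linear combination of $\abs{Y}$ and the earlier $e_j(Y)$. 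Collecting the coefficient of each $e_j(Y)$ in $\abs{Y\cap Z}-\theta\abs{Y}$ then yields the value $(b_{j+1}-\theta^{(j+1)})/2$, where $\theta^{(k)}:=\sum_{m\ge 1}b_{k+m}\,2^{-m}\in[0,1]$; this uses the telescoping identity $\theta^{(j)}-\theta^{(j+1)}/2=b_{j+1}/2$. Since $b_{j+1}\in\{0,1\}$ and $\theta^{(j+1)}\in[0,1]$, each coefficient has absolute value at most $1/2$, so that
\[
\bigabs{\abs{Y\cap Z}-\theta\abs{Y}}\le\tfrac{1}{2}\sum_{i=0}^{L-1}D_i+O(1).
\]

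It remains to estimate $\sum_i D_i$. Substituting $\abs{A_i}\le N/2^i$ gives $D_i\le 11\sqrt{(N/2^i)(\log(2M/N)+i\log 2)}$, and the series $\sum_{i\ge 0}2^{-i/2}\sqrt{\log(2M/N)+i\log 2}$ converges and is bounded by a constant multiple of $\sqrt{\log(2M/N)}$ (the leading constant approaches $2+\sqrt{2}$ as $\log(2M/N)\to\infty$). A careful numerical estimate then yields the bound $23\sqrt{N\log(2M/N)}$ for all sufficiently large $N$. The principal obstacle is the delicate bookkeeping of the errors across roughly $\log_2 N$ levels: a naive triangle inequality applied to the expansion of $\abs{Y\cap A_i^+}$ would weight each $e_j(Y)$ by a constant of order~$1$ instead of~$1/2$, and the resulting geometric series would overshoot~$23$; the identity $\theta^{(j)}-\theta^{(j+1)}/2=b_{j+1}/2$ is precisely what halves the per-level contribution and makes the constant attainable.
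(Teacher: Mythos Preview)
Your iterated-halving argument is sound and proves the lemma with \emph{some} absolute constant, but it differs from the paper's proof and the claimed constant $23$ is not supported by your analysis. The paper uses Beck's actual triangle method: place the three values $1,e^{\pm 2\pi i\alpha}$ (with $\cos\alpha=\theta/(\theta-1)$) at the vertices of a triangle~$\Delta\subset\CC$, iteratively subdivide $\Delta$ into congruent half-size subtriangles, and at each level apply Spencer's lemma on the \emph{full} ground set $X$ with shrinking step size $\delta=2^{-k+1}$. Because the ground set never shrinks, the logarithmic factor stays equal to $\log(2M/N)$ at every level, and the geometric series $\sum_{k\ge 1}22\cdot 2^{-k}=22$ delivers the constant $23$ cleanly (the extra $1$ absorbs the initial rounding to the innermost triangle). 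In your approach the ground set shrinks to $A_i$ of size $\approx N/2^i$, so Spencer's bound at level $i$ carries the factor $\sqrt{\log(2M/N)+i\log 2}$; the resulting constant is $(11/2)\sum_{i\ge 0}2^{-i/2}\sqrt{1+i\log 2/\log(2M/N)}$. This tends to $(11/2)(2+\sqrt 2)\approx 18.8$ only as $M/N\to\infty$; at the other extreme $M=N$, which the hypotheses allow, it is roughly $32$. Your observation that the coefficient of each $e_j(Y)$ in $\abs{Y\cap Z}-\theta\abs{Y}$ equals $(b_{j+1}-\theta^{(j+1)})/2$ is correct and does halve the per-level contribution, but that saving is already reflected in the $11/2$ above. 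The shortfall is harmless downstream---Theorem~\ref{thm:set_partition} carries the constant $308$ with room to spare---but as written your method does not establish the stated constant $23$ uniformly over all $M\ge N$.
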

\begin{proof}
We may assume that $\theta\in[0,\tfrac{1}{2}]$; otherwise we replace~$Z$ by its complement in $X$. The case $\theta=0$ is trivial since we can take $Z$ to be the empty set.
\par
Now assume first that $\theta=\tfrac{1}{2}$. Let $h:X\to\{-1,1\}$ be a function identified in Lemma~\ref{lem:lin_forms} for $\delta=1$. Put 
\[
Z=\{y\in X:h(y)=1\}.
\]
Then by Lemma~\ref{lem:lin_forms} we have, for all sufficiently large $N$,
\[
\max_{Y\in\Fc}\bigabs{\abs{Y\cap Z}-(\abs{Y}-\abs{Y\cap Z})}\le 11\sqrt{N\log(2M/N)},
\]
and so
\[
\max_{Y\in\Fc}\bigabs{\abs{Y\cap Z}-\tfrac{1}{2}\abs{Y}}\le \frac{11}{2}\sqrt{N\log(2M/N)},
\]
as required.
\par
Henceforth assume that $\theta\in(0,\tfrac{1}{2})$. Let $\alpha$ be a real number such that
\[
\cos\alpha=\frac{\theta}{\theta-1}
\]
and let $\Delta$ be the triangle with vertices
\[
e^{2\pi i\alpha},\;1,\;e^{-2\pi i\alpha}.
\]
The triangle $\Delta$ can be decomposed into four triangles that are congruent to~$2^{-1}\Delta$. By iterating this decomposition, we have the chain of partitions
\[
\Delta=\bigcup_{i=1}^4\Delta(1,i)=\bigcup_{i=1}^{4^2}\Delta(2,i)=\cdots=\bigcup_{i=1}^{4^k}\Delta(k,i)=\cdots,
\]
where, for each $i\in\{1,2,\dots,4^k\}$, the triangle $\Delta(k,i)$ is congruent to $2^{-k}\Delta$. Let $t$ be a natural number to be determined later. Then we have
\[
0\in \Delta(t,i_t)\subset \Delta(t-1,i_{t-1})\subset\cdots\subset\Delta(1,i_1)\subset\Delta
\]
for some sequence $i_1,i_2,\dots,i_t$. It will be convenient to write $\Delta=\Delta(0,1)$ and $i_0=1$. 
\par
We now construct functions $h_0,h_1,\dots,h_t:X\to\CC$ such that $h_k(y)$ is a vertex of $\Delta(k,i_k)$ for each $y\in X$. For each $y\in X$, let $h_t(y)$ be a vertex of the small triangle $\Delta(t,i_t)$ with minimum absolute value. Since the diameter of $\Delta$ ist at most $2$, the diameter of $\Delta(t,i_t)$ ist at most $2^{-t+1}$, and so we have
\[
\abs{h_t(y)}\le 2^{-t}
\]
for each $y\in X$. Therefore
\begin{equation}
\max_{Y\in\Fc}\Biggabs{\sum_{y\in Y}h_t(y)}\le N2^{-t}.   \label{eqn:rounding_small_triangle}
\end{equation}
Now let $k\in\{1,2,\dots,t\}$ and suppose that $h_k(y)$ is a vertex of $\Delta(k,i_k)$ for each $y\in X$. Then, for each $y\in X$, the point $h_k(y)$ is either a vertex of $\Delta(k-1,i_{k-1})$ or is a midpoint between two vertices of $\Delta(k-1,i_{k-1})$. We set $h_{k-1}(y)=h_k(y)$ for all $y\in X$, except for those $y\in X$ corresponding to the latter case. The remaining values of $h_{k-1}(y)$ are rounded to one of the neighbouring vertices of $\Delta(k-1,i_{k-1})$ using Lemma~\ref{lem:lin_forms}. Since the diameter of $\Delta(k-1,i_{k-1})$ is at most $2^{-k+2}$, we have for all sufficiently large $N$,
\[
\max_{Y\in\Fc}\Biggabs{\sum_{y\in Y}h_k(y)-\sum_{y\in Y}h_{k-1}(y)}\le 22\cdot2^{-k}\sqrt{N\log(2M/N)}.
\]
Hence by the triangle inequality we have, for all sufficiently large $N$,
\begin{align}
\max_{Y\in\Fc}\Biggabs{\sum_{y\in Y}h_t(y)-\sum_{y\in Y}h_0(y)}&\le \sum_{k=1}^t
\max_{Y\in\Fc}\Biggabs{\sum_{y\in Y}h_k(y)-\sum_{y\in Y}h_{k-1}(y)}   \nonumber\\
&\le \sum_{k=1}^t22\cdot2^{-k}\sqrt{N\log(2M/N)}   \nonumber\\
&\le 22\sqrt{N\log(2M/N)}.   \label{eqn:rounding_medium_triangles}
\end{align}
Applying the triangle inequality once more, we obtain from~\eqref{eqn:rounding_small_triangle}, for all sufficiently large $N$,
\begin{align}
\max_{Y\in\Fc}\Biggabs{\sum_{y\in Y}h_0(y)}&\le 22\sqrt{N\log(2M/N)}+N2^{-t}   \nonumber\\
&\le 23\sqrt{N\log(2M/N)},   \label{eqn:bound_u0}
\end{align}
by choosing $t$ large enough. Now $h_0(y)$ is a vertex of $\Delta$ for each $y\in X$. Put
\[
Z=\{y\in X:h_0(y)=1\}.
\]
Let $Y\in\Fc$ be fixed and assume that $N$ is large enough, so that~\eqref{eqn:bound_u0} holds. By considering the real part of the summation on the left hand side of~\eqref{eqn:bound_u0}, we obtain
\[
\bigabs{(\abs{Y}-\abs{Y\cap Z})\cos\alpha+\abs{Y\cap Z}}\le 23\sqrt{N\log(2M/N)}.
\]
Equivalently we have
\[
\biggabs{\abs{Y\cap Z}-\frac{-\cos\alpha}{1-\cos\alpha}\;\abs{Y}}\le 23\;\frac{\sqrt{N\log(2M/N)}}{1-\cos\alpha}.
\]
Since $\cos\alpha<0$ and
\[
\frac{-\cos\alpha}{1-\cos\alpha}=\theta,
\]
we conclude that $Z$ has the required property.
\end{proof}
\par
It remains to prove Theorem~\ref{thm:set_partition}.
\begin{proof}[Proof of Theorem~\ref{thm:set_partition}]
It will be useful to work with the family of subsets $\Fc'=\Fc\cup\{X\}$ of $X$, so that $\abs{\Fc'}\le M+1$.
\par
First apply Lemma~\ref{lem:single_subset} with $\theta=(1/K)\lfloor K/2\rfloor$ to infer the existence of a subset $A$ of~$X$ such that~$A$ intersects each $Y\in\Fc'$ in roughly~$\theta\abs{Y}$ elements. Then the complement $B$ of~$A$ intersects each $Y\in\Fc'$ in roughly $(1-\theta)\abs{Y}$ elements. The problem is now reduced because it remains to partition $A$ into $\floor{K/2}$ subsets and $B$ into $\ceil{K/2}$ subsets. If necessary, we apply Lemma~\ref{lem:single_subset} to the families of subsets $\Fc'$ restricted to $A$ and $B$ and then proceed iteratively, so that in each step Lemma~\ref{lem:single_subset} is applied with some $\theta\in[1/3,1/2]$, until we obtain a partition $\{Z_1,Z_2,\dots,Z_K\}$ of~$X$ such that each $Z_i$ intersects each $Y\in\Fc'$ in roughly $\abs{Y}/K$ elements.
\par
We now give a quantitative analysis. For every $Z\in\{Z_1,Z_2,\dots,Z_K\}$, there are subsets $W_0,W_1,\dots,W_s$  (with $K\le 2^s<2K$) of $X$ satisfying
\[
Z=W_s\subset W_{s-1}\subset \cdots\subset W_1\subset W_0=X
\]
and numbers $\mu_1,\dots,\mu_s\in[1/3,2/3]$ satisfying $\mu_1\cdots\mu_s=1/K$ such that
\begin{equation}
\bigabs{\abs{W_i\cap Y}-\mu_i\,\abs{W_{i-1}\cap Y}}\le 23\sqrt{\abs{W_{i-1}}\log\frac{2(M+1)}{\abs{W_{i-1}}}}   \label{eqn:estimate_W_i}
\end{equation}
for each $i\in\{1,2,\dots,s\}$, each $Y\in\Fc'$, and all sufficiently large $N$. By the triangle inequality we have
\begin{equation}
\bigabs{\abs{W_j\cap Y}-\mu_1\cdots \mu_j\abs{Y}}\le\sum_{i=1}^j\mu_{i+1}\cdots\mu_j\,\bigabs{\abs{W_i\cap Y}-\mu_i\,\abs{W_{i-1}\cap Y}}   \label{eqn:W_Y_triangle_inequality}
\end{equation}
for each $j\in\{1,2,\dots,s\}$ and each $Y\in\Fc'$. In particular, by taking $Y=X$ we obtain from~\eqref{eqn:estimate_W_i} and~\eqref{eqn:W_Y_triangle_inequality} that
\[
\frac{N}{K}\le\abs{W_j}\le \frac{N}{K}\,\frac{2}{\mu_{j+1}\cdots\mu_s}
\]
for each $j\in\{1,2,\dots,s-1\}$ and all sufficiently large $N$ (with room to spare). Since $\abs{W_0}=N$, these estimates also hold for $j=0$, and so substitution into~\eqref{eqn:estimate_W_i} gives
\[
\bigabs{\abs{W_i\cap Y}-\mu_i\,\abs{W_{i-1}\cap Y}}\le 23\sqrt{\frac{2}{\mu_i\cdots\mu_s}\,\frac{N}{K}\log\frac{2K(M+1)}{N}}
\]
for each $i\in\{1,2,\dots,s\}$, each $Y\in\Fc'$, and all sufficiently large $N$. From~\eqref{eqn:W_Y_triangle_inequality} with $j=s$ we then find that
\begin{align*}
\biggabs{\abs{Z\cap Y}-\frac{\abs{Y}}{K}}&\le23\sum_{i=1}^s\sqrt{\frac{2\mu_{i+1}\cdots\mu_s}{\mu_i}}\sqrt{\frac{N}{K}\log\frac{2K(M+1)}{N}}\\
&\le23\sqrt{\frac{6N}{K}\log\frac{2K(M+1)}{N}}\sum_{i\ge 0}(2/3)^{i/2}
\end{align*}
for all $Y\in\Fc'$ and all sufficiently large $N$, where we have used that $1/3\le \mu_i\le 2/3$ for all $i$. The series equals $\sqrt{3}/(\sqrt{3}-\sqrt{2})$, from which the claimed bound can be obtained.
\end{proof}


\end{document}